\newtheorem{theorem}{Theorem}[section]
\newtheorem{lemma}[theorem]{Lemma}
\newtheorem{proposition}[theorem]{Proposition}
\newtheorem{corollary}[theorem]{Corollary}
\newtheorem{question}[theorem]{Question}
\newtheorem{conjecture}[theorem]{Conjecture}
\newcommand{\Z}{\mathbb{Z}}
\renewcommand{\ker}{\operatorname{Ker}}
\newcommand{\id}{\operatorname{id}}
\newcommand{\Sym}{\operatorname{Sym}}
\newcommand{\aut}{\operatorname{Aut}}
\newcommand{\soc}{\operatorname{Soc}}
\newcommand{\Aut}{\operatorname{Aut}}
\newcommand{\gr}{\operatorname{gr}}
\newcommand{\Ze}{\operatorname{Z}}
\newenvironment{proof}{\par\noindent{ Proof.}}{$\qed$\par\bigskip}
\newcommand{\qed}{\enspace\vrule  height6pt  width4pt  depth2pt}
\begin{document}
\title{New classes of  IYB groups
}
\author{F. Ced\'o \and J. Okni\'{n}ski
}

\date{}

\maketitle

\vspace{30pt}
 \noindent \begin{tabular}{llllllll}
  F. Ced\'o && J. Okni\'{n}ski \\
 Departament de Matem\`atiques &&  Institute of
Mathematics \\
Universitat Aut\`onoma de Barcelona &&    University of Warsaw\\
08193 Bellaterra (Barcelona), Spain    &&  Banacha 2, 02-097 Warsaw, Poland \\
Ferran.Cedo@uab.cat && okninski@mimuw.edu.pl
\end{tabular}\\

\vspace{30pt} \noindent Keywords:  brace, Yang--Baxter equation, 
solvable group, Sylow tower property, nilpotent groups\\

\noindent 2010 MSC: Primary 16T25, 20F16 \\

\begin{abstract}
	It is proven that every finite group of odd order with all Sylow subgroups of nilpotency class at most two is an involutive Yang-Baxter group (IYB group for short), i.e. it admits a structure of left brace. It is also proven that every finite solvable group of even order with all Sylow subgroups of nilpotency class at most two and abelian Sylow $2$-subgroups is an IYB group. These results contribute to the open problem asking which finite solvable groups are IYB, in particular they generalize a result of Ben David and Ginosar \cite{BDG} concerned with finite solvable groups with abelian Sylow subgroups. With the same techniques it is proven that every finite solvable group with all Sylow  subgroups nilpotent of class at most two is isomorphic to the multiplicative group of a skew left brace of nilpotent type. It is also proven that every finite group with the Sylow tower property is isomorphic to the multiplicative group of a skew left brace of nilpotent type.
\end{abstract}

\newpage

\section{Introduction and preliminaries}
The Yang-Baxter equation is an important equation in mathematical physics and it lies at the foundations of quantum groups.
In \cite{drinfeld} Drinfeld suggested to study the set-theoretic solutions of the Yang--Baxter equation. Gateva-Ivanova and Van den Bergh \cite{GIVdB} and Etingof, Schedler and Soloviev \cite{ESS} introduced the class of so called involutive non-degenerate set-theoretic solutions of the Yang--Baxter equation and studied these solutions introducing a special class of groups. Rump in \cite{R07} introduced a new algebraic structure, called brace, to study this class of solutions. Guarnieri and Vendramin in \cite{GV} introduced skew braces to study arbitrary non-degenerate set-theoretic solutions of the Yang--Baxter equation. Because of deep connections to several areas of mathematics, this area has attracted a lot of attention during the last twenty years, see for example \cite{Csurvey} and the references therein. 

Let $X$ be a non-empty set and  let  $r:X\times X \longrightarrow
X\times X$ be a map. For $x,y\in X$ we put $r(x,y) =(\sigma_x (y),
\gamma_y (x))$. Recall that $(X,r)$ is an involutive,
non-degenerate set-theoretic solution of the Yang--Baxter equation
if $r^2=\id$, all the maps $\sigma_x$ and $\gamma_y$ are bijective
maps from  the set $X$ to itself and
$$r_{12} r_{23} r_{12} =r_{23} r_{12} r_{23},$$
where $r_{12}=r\times \id_X$ and $r_{23}=\id_X\times \ r$ are maps
from $X^3$ to itself. Because $r^{2}=\id$, one easily verifies
that $\gamma_y(x)=\sigma^{-1}_{\sigma_x(y)}(x)$, for all $x,y\in
X$ (see for example \cite[Proposition~1.6]{ESS}).

\bigskip
\noindent {\bf Convention.} Throughout the paper a solution of the
YBE will mean an involutive, non-degenerate, set-theoretic
solution of the Yang--Baxter equation. \\

The permutation group of a solution $(X,r)$ of the YBE, \cite{GIC}, is
$$\mathcal{G}(X,r)=\gr(\sigma_x\mid x\in X).$$ 
Etingof, Schedler and Soloviev in \cite{ESS} proved that for every solution $(X,r)$ of the YBE there exists an abelian group $A$, an action of $\mathcal{G}(X,r)$ on $A$ and a bijective 1-cocycle $\mathcal{G}(X,R)\rightarrow A$. Using this, they proved in the same paper that if $(X,r)$ is a finite solution of the YBE, then the group $\mathcal{G}(X,r)$ is solvable. 

A left brace, \cite{R07,CJO}, is a set $B$ with two binary operations, $+$ and
$\circ$, such that $(B,+)$ is an abelian group (the additive group
of $B$), $(B,\circ)$ is a group (the multiplicative group of $B$),
and for every $a,b,c\in B$,
\begin{eqnarray} \label{braceeq}
	a\circ (b+c)+a&=&a\circ b+a\circ c.
\end{eqnarray}
In any left brace $B$  the neutral elements $0,1$ for the
operations $+$ and $\circ$ coincide. Moreover, there is an action
$\lambda\colon (B,\circ)\longrightarrow \aut(B,+)$, called the
lambda map of $B$, defined by $\lambda(a)=\lambda_a$ and
$\lambda_{a}(b)=-a+a\circ b$, for $a,b\in B$. We shall write
$a\circ b=ab$ and $a^{-1}$ will denote the inverse of $a$ for the
operation $\circ$, for all $a,b\in B$. A trivial brace is a left
brace $B$ such that $ab=a+b$, for all $a,b\in B$, i.e. all
$\lambda_a=\id$. The socle of a left brace $B$ is
$$\soc(B)=\{ a\in B\mid ab=a+b, \mbox{ for all
}b\in B \}.$$ Note that $\soc(B)=\ker(\lambda)$, and thus it is a
normal subgroup of the multiplicative group of $B$.

An involutive Yang-Baxter group (IYB group for short) is a finite group isomorphic to the permutation group $\mathcal{G}(X,r)$ of a finite solution of the YBE, \cite{CJR}. Hence by the above result of Etingof, Schedler and Soloviev, every IYB group is solvable. This is equivalent to saying that the multiplicative group of a finite left brace is solvable \cite[Theorem 2.1]{CJR}. In \cite{CJR} it is proven that the class of IYB group includes the following: finite abelian-by-cyclic groups, finite nilpotent groups of class 2 (see also \cite{AW}), direct products and wreath products of IYB groups, semidirect products $A\rtimes H$ with $A$ a finite abelian group and $H$ an IYB group, Hall subgroups of IYB groups, Sylow subgroups of the symmetric groups $\Sym_n$. As a consequence it is also proven that every finite solvable group is isomorphic to a subgroup of an IYB group, and every finite nilpotent group is isomorphic to a subgroup of a nilpotent IYB group.
Finite solvable groups with abelian Sylow subgroups (so called $A$-groups) are IYB groups, \cite[Corollary 4.3]{BDG} and \cite[Theorem 2.1]{CJO20}.
 
On the other hand, Bachiller in \cite{B} shows an example of a finite $p$-group $G$, for a prime $p$, such that $G$ is not an IYB group.  Thus, the following is a natural question.
 
\begin{question}\label{general}
	Let $G$ be a finite solvable group. Suppose that all Sylow subgroups of $G$ are IYB groups. Is $G$ an IYB group? 
\end{question}	
Actually, if all Sylow subgroups of $G$ coming from a Sylow system on $G$ are left braces, then Theorem 3.7 in \cite{rump} gives certain conditions under which these brace structures extend to a structure of left brace on $G$; these conditions are formulated in the equivalent language of the so called affine structures.

	Note that if the answer to this question is affirmative, then the additive Sylow subgroups $P_1,\dots ,P_m$ of the left brace $G$ form a Sylow system  of its multiplicative group. Hence $P_iP_j=P_jP_i$ for all $i,j$. If $x\in P_i$ and $y\in P_j$, then there exist unique elements $z\in P_j$ and $t\in P_i$ such that $xy=zt$. Since in a left brace $xy=\lambda_x(y)\lambda^{-1}_{\lambda_x(y)}(x)$ and $\lambda_x(y)\in P_j$ and $\lambda^{-1}_{\lambda_x(y)}(x)\in P_i$, we have that $\lambda_x(y)=z$ and $\lambda^{-1}_{\lambda_x(y)}(x)=t$. In particular, if $P_j$ is normal in the multiplicative group of $G$, then
	for every $x\in P_i$ and $y\in P_j$, $xy=xyx^{-1}x$, where $xyx^{-1}\in P_j$, and thus $\lambda_x(y)=xyx^{-1}$.

On the other hand, suppose that $P$ is a finite $p$-group, for a prime $p$, such that $\Aut(P)$ is not a $p$-group. 
Let $q$ be a prime divisor of $|\Aut(P)|$, $q\neq p$. Let $\alpha\in\Aut(P)$ be of order $q$. Suppose that $P$ is an IYB group. 
Consider the semidirect product $P\rtimes\Z/(q)$, where
$$(a_1,b_1)\cdot (a_2,b_2)=(a_1\alpha^{b_1}(a_2),b_1+b_2).$$
Suppose that the answer to Question \ref{general} is affirmative. Since the abelian group $\Z/(q)$ is an IYB group, we have that $P\rtimes\Z/(q)$ also is an IYB group. Let $P_1=P\times \{ 0\}$ and $P_2=\{ 1\}\times \Z/(q)$. There exists a structure of left brace $B$ on $P\rtimes \Z/(q)$ such that $P_1$ is the Sylow $p$-subgroup of the additive group of $B$ (because $P_1$ is normal in the multiplicative group of $B$). Furthermore, there exists $(a,b)\in P\rtimes \Z/(q)$ such that $(a,b)P_2(a,b)^{-1}$ is the Sylow $q$-subgroup of the additive group of $B$. Let $f\colon B\longrightarrow P\rtimes \Z/(q)$ be the map defined by $f(a_1,b_1)=(a,b)^{-1}(a_1,b_1)(a,b)$. Then $f$ is an isomorphism from the multiplicative group of $B$ to the group $P\rtimes \Z/(q)$. We define a structure of left brace $B_1$ on the group $P\rtimes \Z/(q)$ defining a sum $+_1$ on the group $P\rtimes \Z/(q)$ as follows:
$$(a_1,b_1)+_1(a_2,b_2)=f(f^{-1}(a_1,b_1)+f^{-1}(a_2,b_2)),$$
for all $(a_1,b_1),(a_2,b_2)\in P\rtimes \Z/(q)$, where $+$ is the sum of the left brace $B$. Note that $f$ is an isomorphism of left braces from the left brace $B$ to the left brace $B_1$. In particular, $f(P_1)=P_1$ is the Sylow $p$-subgroup of $(B_1,+_1)$ and $P_2=f((a,b)P_2(a,b)^{-1})$ is the Sylow $q$-subgroup of $(B_1,+_1)$. 
By the argument given above, $\lambda_{x}(y)=xyx^{-1}$ for $x\in P_2, y\in P_1$, in the left brace $B_1$, so that we obtain
\begin{eqnarray*}
	\lambda_{(1,1)}(a_1,0)&=&(1,1)(a_1,0)(1,-1)\\
	&=&(\alpha(a_1),1)(1,-1)=(\alpha(a_1),0).
\end{eqnarray*}  
Let $\beta\in \Aut(P_1)$ be the automorphism defined by $\beta(x,0)=(\alpha(x),0)$. Then  $\beta\in \Aut(P_1,+,\cdot)$  because $\lambda_{(1,1)}\in \Aut (B_1,+)$. In particular, this proves that there exists a structure of left brace on the group $P$ such that $\alpha$ is an automorphism of this left brace.

The above observation leads to an interesting consequence. Namely, assume that $B$ is an IYB group of cardinality $p^n$, for a prime $p$. If there exists some $\sigma \in \Aut (B,\cdot)$ of prime order $q\neq p$ such that $\sigma \notin \Aut (B,+,\cdot)$ for every left brace structure $(B,+,\cdot)$ on $B$, then the answer to Question~\ref{general} is negative.

In this paper, we focus on the problem whether finite solvable groups with all Sylow subgroups of nilpotency class at most $2$ are IYB groups.

\section{Results}
It is shown in \cite[Theorem 3.3]{CJR} that if $G$ is a finite group such that $G=AH$, where $A$ is an abelian normal subgroup and $H$ is an IYB subgroup such that $A\cap H=\{ 1\}$, then $G$ is also an IYB group. This is essentially due to the fact that if we consider the trivial structure of brace on the abelian group $A$, then every automorphism of the group $A$ is also an automorphism of the trivial brace $A$.  This motivates our first observation.

\begin{lemma}\label{aut}
	Let $G$ be a nilpotent group of class $2$ and with derived subgroup $G'$ of odd order. Then there exists a structure of left brace on $G$ such that $\Aut(G)=\Aut(G,+,\cdot)$.
\end{lemma}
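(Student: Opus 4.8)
The plan is to build the additive operation $+$ on $G$ directly out of the multiplicative one by \emph{halving} the commutators, and then to exploit the fact that, since this construction is expressed purely through the group multiplication, the commutator bracket and integer powers, every group automorphism automatically respects it. First I would record the two structural features of a nilpotent group $G$ of class $2$: the commutator subgroup $G'$ is central, and the commutator is bilinear, so that $[ab,c]=[a,c][b,c]$ and $[a,bc]=[a,b][a,c]$ with every $[a,b]\in Z(G)$. Because $|G'|$ is odd, $2$ is invertible modulo $\exp(G')$; I fix an integer $s$ with $2s\equiv -1 \pmod{\exp(G')}$, so that $g^{s}$ is a well-defined element of $G'$ for every $g\in G'$, and define
$$a+b=ab\,[a,b]^{s}\qquad(a,b\in G).$$

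Next I would check that $(G,+)$ is an abelian group and that $(G,+,\cdot)$ is a left brace. Commutativity is precisely the reason for the choice of $s$: from $ba=ab[a,b]^{-1}$ one gets $b+a=ab[a,b]^{-1-s}$, and this equals $a+b=ab[a,b]^{s}$ exactly because $2s\equiv -1$. Associativity, the neutral element (which is $1$, giving $0=1$ as a brace requires) and the additive inverse (which is simply $a^{-1}$) all come out by pushing the central factors $[a,b]^{s}$ to the right and invoking bilinearity; e.g. both $(a+b)+c$ and $a+(b+c)$ collapse to $abc\,[a,b]^{s}[a,c]^{s}[b,c]^{s}$. For the brace axiom I would use that, by definition of the lambda map, $\lambda_a(b)=-a+a\circ b = b\,[a,b]^{-s}$, and verify that each $\lambda_a$ is additive, which is equivalent to (\ref{braceeq}); this is again a short computation controlled by centrality and bilinearity of commutators, and it also yields $\lambda_{ab}=\lambda_a\lambda_b$.

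The actual content of the lemma is the final step. Since the formula for $+$ uses only the multiplication of $G$, the commutator $[a,b]$, and the fixed power $s$, any $\varphi\in\Aut(G)=\Aut(G,\cdot)$ satisfies
$$\varphi(a+b)=\varphi(a)\varphi(b)\,[\varphi(a),\varphi(b)]^{s}=\varphi(a)+\varphi(b),$$
so $\varphi\in\Aut(G,+)$ as well; hence $\Aut(G)\subseteq\Aut(G,+,\cdot)$, and the reverse inclusion is immediate. I do not anticipate a genuine obstacle: the whole argument is elementary, and the hypotheses enter at exactly one point, namely the odd order of $G'$, which is what allows the halving $s$ to exist and thereby forces $+$ to be commutative. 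The only part demanding care is the second step, the full verification of the additive group and brace axioms; the conceptual point — invariance under all automorphisms — is automatic once $+$ is recognized as canonically determined by $\cdot$.
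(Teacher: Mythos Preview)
Your proposal is correct and is essentially the same argument as the paper's: your formula $a+b=ab\,[a,b]^{s}$ with $2s\equiv -1$ is literally the paper's $h_1+h_2=h_1h_2[h_2,h_1]^{1/2}$, and the observation that every $\varphi\in\Aut(G)$ preserves this operation because it is built from the multiplication and integer powers is exactly the paper's point. The only difference is that the paper outsources the verification that $(G,+,\cdot)$ is a (two-sided) brace to the reference \cite{AW}, whereas you sketch the computations directly.
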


\begin{proof}
	Since $G'$ has odd order, every element of $G'$ has a unique square root. We define a sum $+$ in $G$ by the rule
	$$h_1+h_2=h_1h_2[h_2,h_1]^{\frac{1}{2}}$$
    for $h_1,h_2\in G$. By \cite{AW}, $(G,+,\cdot)$ is a two-sided brace. Let $f\in\Aut(G)$. We have that
	\begin{align*}f(h_1+h_2)=&f(h_1h_2[h_2,h_1]^{\frac{1}{2}})\\
		=&(f(h_1)f(h_2))[f(h_2),f(h_1)]^{\frac{1}{2}}\\
		=&f(h_1)+f(h_2),
		\end{align*} 
		for all $h_1,h_2\in G$. Hence $f\in\Aut(G,+,\cdot)$ and thus $\Aut(G)=\Aut(G,+,\cdot)$.
\end{proof} 

As a consequence, we get the following result that generalizes \cite[Theorem 3.3]{CJR}.

\begin{proposition}\label{nil}
	Let $G$ be a finite group with a normal nilpotent subgroup $N$ of nilpotency class at most $2$ and with derived subgroup $N'$ of odd order and a subgroup $H$ such that $G=NH$ and $N\cap H=\{ 1\}$. If $H$ is an IYB group, then $G$ is also an IYB group.
\end{proposition}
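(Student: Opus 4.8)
The plan is to realise $G$ as the multiplicative group of a left brace obtained as a semidirect product of a suitable brace on $N$ and the given brace on $H$. First I would equip $N$ with a brace. If $N$ is abelian, take the trivial brace; if $N$ has class exactly $2$, apply Lemma~\ref{aut}. In either case one obtains a left brace $(N,+,\cdot)$ whose multiplicative group is the original group $N$ and which satisfies $\Aut(N)=\Aut(N,+,\cdot)$ (for the trivial brace this equality is immediate, since there $a+b=a\cdot b$). Next, since $H$ is an IYB group, fix a left brace structure on $H$ whose multiplicative group is $H$.

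The key point is the conjugation action. As $N$ is normal in $G=NH$, conjugation defines a homomorphism $\alpha\colon H\to\Aut(N)$ by $\alpha_h(n)=hnh^{-1}$. By the previous paragraph $\Aut(N)=\Aut(N,+,\cdot)$, so each $\alpha_h$ is in fact an automorphism of the brace $N$; that is, $\alpha$ is an action of $H$ on $N$ by brace automorphisms. This is exactly what makes the construction below work, and it is the heart of the argument: for a general normal subgroup the conjugation action need not preserve any brace addition, so one genuinely uses the conclusion of Lemma~\ref{aut} that every group automorphism of $N$ respects $+$.

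I would then form the semidirect product brace $B$ on the set $N\times H$, with addition taken componentwise and multiplication $(n_1,h_1)(n_2,h_2)=(n_1\,\alpha_{h_1}(n_2),\,h_1 h_2)$. Checking that $B$ is a left brace is routine: its multiplicative group is the semidirect product of the multiplicative groups of $N$ and $H$ along $\alpha$, while the brace identity~\eqref{braceeq} for $B$ reduces componentwise to the brace identities of $N$ and of $H$ together with the additivity of each $\alpha_h$. Finally, the map $(n,h)\mapsto nh$ is an isomorphism from the multiplicative group of $B$ onto $G$, since $G=NH$, $N\cap H=\{1\}$ and $N\trianglelefteq G$, so that $\alpha_h$ coincides with conjugation inside $G$. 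Hence $G$ is isomorphic to the multiplicative group of a left brace, i.e. $G$ is an IYB group. The only delicate step is the one highlighted above, namely that the conjugation action lands in $\Aut(N,+,\cdot)$; everything else is a direct verification.
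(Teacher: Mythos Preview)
Your proof is correct and follows essentially the same approach as the paper: both equip $N$ with the brace from Lemma~\ref{aut} (or the trivial brace if $N$ is abelian), use that $\Aut(N)=\Aut(N,+,\cdot)$ so conjugation by $H$ acts by brace automorphisms, and then form the semidirect product brace. The only cosmetic difference is that the paper works directly on $G$ via the inner semidirect product, defining $ah_1+bh_2=(a+b)(h_1+h_2)$, whereas you build the external semidirect product on $N\times H$ and then identify it with $G$ via $(n,h)\mapsto nh$.
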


\begin{proof}
	Note that $G$ is the inner semidirect product of $N$ by $H$. If $N$ is abelian, then by \cite[Theorem 3.3]{CJR} $G$ is an IYB group.
	
	Let $(H,+,\cdot)$ be a structure of left brace on the group $H$.
	Suppose that $N$ is nilpotent of nilpotency class $2$. By Lemma \ref{aut}, there exists a structure of left brace $(N,+,\cdot)$ on the group $N$ such that $\Aut(N)=\Aut(N,+,\cdot)$. We define the sum $+$ on $G$ by
	$$ah_1+bh_2=(a+b)(h_1+h_2)$$
	for all $a,b\in N$ and $h_1,h_2\in H$. Since $\Aut(N)=\Aut(N,+,\cdot)$, we have that $(G,+,\cdot)$ is a left brace, it is the inner semidirect product of the left brace $(N,+,\cdot)$ by the left brace $(H,+,\cdot)$  (see \cite[Section 3]{Csurvey}). Therefore, the result follows.  
\end{proof}

Recall that a finite group $G$ has the Sylow tower property if there exists a normal series
$$\{ 1\}=G_0\subseteq G_1\subseteq\dots\subseteq G_n=G$$
such that $G_{i+1}/G_{i}$ is isomorphic to a Sylow subgroup of $G$ for all $i=0,\dots ,n-1$.
In this case $G$ is solvable and $G_1$ is a normal Sylow subgroup of $G$.

In \cite[Corollary 4.3]{BDG} it is stated that every finite solvable $A$-group is an IYB
group. However, its proof is only valid with the additional hypothesis that the finite solvable $A$-group has the Sylow tower property. The proof of the following result is an easy generalization of the argument used in the proof of \cite[Corollary 4.3]{BDG}.

\begin{theorem}\label{tower}
	Let $G$ be a finite group of odd order or such that all Sylow $2$-subgroups of $G$ are abelian. If $G$ has the Sylow tower property and all the Sylow subgroups of $G$ have nilpotency class at most $2$, then $G$ is an IYB group.
\end{theorem}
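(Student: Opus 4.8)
The plan is to argue by induction on $|G|$, peeling off the bottom normal Sylow subgroup of the Sylow tower and applying Proposition \ref{nil}. The base case $G=\{1\}$ is trivial (the trivial brace). For the inductive step, by the Sylow tower property and the remark following its definition, $G$ has a normal Sylow subgroup $N:=G_1$, say a Sylow $p_1$-subgroup. Since $N$ is a normal Hall subgroup of $G$, the Schur--Zassenhaus theorem provides a complement $H$, so that $G=NH$, $N\cap H=\{1\}$, and $H\cong G/N$.

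I would first check that $H$ inherits all three hypotheses. The series $\{1\}=G_1/G_1\subseteq\cdots\subseteq G_n/G_1=G/G_1$ exhibits the Sylow tower property for $H\cong G/N$: its successive quotients are isomorphic to $G_{i+1}/G_i$, which are Sylow subgroups of $G$ for primes different from $p_1$, hence Sylow subgroups of $H$. Each such Sylow subgroup therefore has nilpotency class at most $2$. Finally, the condition on the prime $2$ passes to $H$: if $G$ has odd order so does $H$; and if the Sylow $2$-subgroups of $G$ are abelian, then the Sylow $2$-subgroup of $H$ is either trivial (when $p_1=2$, in which case $H$ has odd order) or isomorphic to a Sylow $2$-subgroup of $G$ (when $p_1\neq 2$), hence abelian. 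By the induction hypothesis $H$ is thus an IYB group.

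It then remains to verify that $N=G_1$ meets the hypotheses of Proposition \ref{nil}: it is normal and nilpotent of class at most $2$ (being a Sylow subgroup of $G$), so the only nontrivial point is that its derived subgroup $N'$ has odd order. Granting this, Proposition \ref{nil} applied to $G=NH$ with the IYB complement $H$ yields that $G$ is an IYB group, completing the induction.

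The crux — and the only place where the dichotomy in the hypothesis is genuinely used — is the oddness of $|N'|$. If $p_1$ is odd (in particular whenever $G$ has odd order), then $N$ is a group of odd order and $N'$ has odd order automatically. The delicate case is $p_1=2$: here $N$ is a Sylow $2$-subgroup of $G$, which by hypothesis must be abelian, forcing $N'=\{1\}$. Hence in every case $N'$ has odd order, which is exactly what feeds Lemma \ref{aut} through Proposition \ref{nil}. I expect this to be the main (if small) obstacle, precisely because it pinpoints why the theorem demands the Sylow $2$-subgroups to be abelian rather than merely of class at most $2$: when the normal bottom layer of the tower is the $2$-subgroup, Lemma \ref{aut} is unavailable unless that subgroup is abelian.
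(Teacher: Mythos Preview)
Your proof is correct and follows essentially the same route as the paper's: induction (you use $|G|$, the paper uses the number of prime divisors of $|G|$), Schur--Zassenhaus to split off the bottom normal Sylow subgroup $G_1$, the induction hypothesis on the complement $H\cong G/G_1$, and then Proposition~\ref{nil} to conclude. You are in fact more explicit than the paper about the one point that matters, namely why $G_1'$ has odd order (the paper simply invokes Proposition~\ref{nil} without spelling out the case split $p_1$ odd versus $p_1=2$).
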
 

\begin{proof}
	Since $G$ has the Sylow tower property, there exists a normal series
	$$\{ 1\}=G_0\subseteq G_1\subseteq\dots\subseteq G_n=G$$
	such that $G_{i+1}/G_{i}$ is isomorphic to a Sylow subgroup of $G$ for all $i=0,\dots ,n-1$. Note that $n$ is the number of distinct prime divisors of the order of $G$.
	We shall prove the result by induction on $n$.  For $n=1$, $G=G_1$ is a $p$-group for some prime $p$ and by the hypothesis it has nilpotency class at most $2$. Hence $G$ is an IYB group in this case.
	Suppose $n>1$ and that the result holds for $n-1$. Let $p_1,\dots, p_n$ be the distinct prime divisors of the order of $G$, such that $G_{i+1}/G_{i}$ is a $p_{i+1}$-group for all $i=0,\dots ,n-1$. By Schur-Zassenhaus theorem, $G_1$ has a complement $H$ in $G$. Thus $G=G_1H$ is the inner semidirect product of $G_1$ by $H$. It is easy to see that $H\cong G/G_1$ has the Sylow tower property and the number of prime divisors of the order of $H$ is $n-1$. Since the Sylow subgroups of $H$ are also Sylow subgroups of $G$, by the induction hypothesis $H$ is an IYB group. Since $G_1$ has nilpotency class at most $2$, by Proposition \ref{nil}, $G$ is an IYB group. Therefore, the result follows by induction.	 
\end{proof}

A natural question that arises is: what happens if $G$ has a non-abelian $2$-subgroup and $G$ has the Sylow tower property?

In \cite[Theorem 2.1]{CJO20}, to prove that finite solvable $A$-groups are IYB groups, the special structure of $A$-groups decribed in \cite{T} is used. Thus, in order to deal with groups without Sylow tower property, we first describe a special structure of a finite solvable group such that all Sylow subgroups have nilpotency class at most $2$. This will be the key to prove that finite groups of odd order such that all Sylow subgroups have nilpotency class at most $2$ are IYB groups.  Recall that the Fitting subgroup $F(G)$ of a group $G$ is the is the maximal normal nilpotent subgroup of $G$, \cite{Robinson}.	

\begin{theorem}\label{main1}
	Let $G$ be a finite solvable group such that all Sylow subgroups have nilpotency class at most $2$. Then there exist nilpotent subgroups $N_1,\dots , N_k$ and subgroups $G=M_0\supseteq M_1\supseteq\dots\supseteq M_{k-1}\supseteq M_k=\{ 1\}$ such that
	\begin{itemize}
	\item[(i)] $G=N_1\cdots N_iM_i$, for all $1\leq i\leq k$,
	\item[(ii)] $M_{i-1}=N_iM_i$, for all $1\leq i\leq k$,
	\item[(iii)] $N_i$ is normal in $M_{i-1}$, for all $1\leq i\leq k$,
	\item[(iv)] $((\dots ((N_1\cap M_1)N_2)\cap \dots\cap M_{i-1})N_i)\cap M_i$ is normal in $M_i$ and a central subgroup of $F(M_{i-1})F(M_i)$, for all $1\leq i\leq k$. 
		\end{itemize} 
\end{theorem}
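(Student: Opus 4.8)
The plan is to construct the subgroups recursively and argue by induction on $|G|$. At the generic stage I am given a solvable group $M:=M_{i-1}$ all of whose Sylow subgroups have class at most $2$, and I set $N_i:=F(M_{i-1})$, which is normal in $M_{i-1}$, giving (iii) for free; since $F(M)$ is nilpotent and each of its Sylow subgroups lies inside a Sylow subgroup of $M$, it has class at most $2$, so $F(M)/Z(F(M))$ is abelian. I then choose a supplement $M_i$ with $M_{i-1}=N_iM_i$, and condition (i) follows at once by iterating (ii), since $M_0=N_1M_1=N_1N_2M_2=\dots=N_1\cdots N_iM_i$. The recursion stops at the first index $k$ with $M_k=\{1\}$, i.e.\ as soon as $M_{k-1}$ is nilpotent and $N_k=M_{k-1}$; termination is guaranteed because $F(M)\not\subseteq\Phi(M)$ for $M\neq\{1\}$ \cite{Robinson}, so $F(M)$ always has a proper supplement and $|M_i|<|M_{i-1}|$.

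The crucial simplification is that the iterated intersection in (iv) collapses to a single one. Suppose inductively that the group $L_{i-1}$ built at the previous stage (the expression in (iv) ending in $\cap M_{i-1}$) is central in $F(M_{i-2})F(M_{i-1})$ and normal in $M_{i-1}$; being central it is abelian, and an abelian normal subgroup of $M_{i-1}$ lies in $F(M_{i-1})=N_i$. Hence $L_{i-1}N_i=N_i$ and the long expression reduces to
\[
L_i=\big(L_{i-1}N_i\big)\cap M_i=F(M_{i-1})\cap M_i .
\]
Moreover $F(M_{i-1})\trianglelefteq M_{i-1}$ forces $F(M_{i-1})\cap M_i\trianglelefteq M_i$, so the normality demanded in (iv) is automatic. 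Thus the whole theorem reduces to a one–step assertion: for solvable $M$ with all Sylow subgroups of class at most $2$ there is a supplement $M'$ of $F(M)$ with $F(M)\cap M'$ central in $F(M)F(M')$.

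To produce $M'$ I work modulo $Z:=Z(F(M))$. As $M$ is solvable, $C_M(F(M))\subseteq F(M)$, whence $C_M(F(M))=Z$; so any subgroup of $Z$ already centralizes $F(M)$, which yields half of the required centrality. Since the class is at most $2$, $\overline F:=F(M)/Z$ is an abelian normal subgroup of $\overline M:=M/Z$, and I aim to complement it: a complement lifts to a supplement $M'$ with $F(M)\cap M'=Z\subseteq Z(F(M))$. Existence of the complement I will obtain prime by prime via Gaschütz' splitting theorem, the class $2$ hypothesis being what controls the Sylow subgroups entering its hypothesis; when $F(M)$ happens to be abelian one has $Z=F(M)$ and this is vacuous, so there I instead split off a chief factor inside $F(M)$ by a Schur--Zassenhaus/Gaschütz argument, still obtaining a proper supplement whose intersection with $F(M)$ is central.

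The main obstacle is the \emph{second} half of the centrality in this one step: $F(M)\cap M'$ must also lie in the centre of the \emph{new} Fitting subgroup $F(M')$, not merely of $F(M)$. Writing $L=F(M)\cap M'\subseteq Z$ and fixing a prime $p$, one has $L_p\subseteq Z(O_p(M))$ while $O_p(M')$ sits inside a Sylow $p$-subgroup $P$ of $M$; because $P$ has class at most $2$, all commutators $[P,O_p(M)]$ lie in $Z(P)\cap O_p(M)$, and it is precisely this restriction that must be leveraged to force $[O_p(M'),L_p]=1$. Making this effective requires choosing $M'$ compatibly with a fixed Sylow (Hall) system of $M$, so that $O_p(M')$ normalizes $L_p$ inside $P$ and the class $2$ commutator relations collapse the induced action to the identity. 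I expect this simultaneous control of $F(M)$ and $F(M')$ — together with checking that the complement of $\overline F$ can be taken inside the Hall system — to be the technical heart of the proof; everything else is the bookkeeping that feeds the single step into the induction through the collapse identity $L_i=F(M_{i-1})\cap M_i$.
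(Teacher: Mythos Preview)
Your plan diverges from the paper in a significant structural way and leaves the decisive step unproved.

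The paper does \emph{not} take $N_i=F(M_{i-1})$. Instead it sets $H_i=\pi_i^{-1}(\Ze(F(M_{i-1}/F(M_{i-1}))))$, defines $N_i:=H_i'\subseteq F(M_{i-1})$, and takes $M_i$ to be a \emph{system normalizer} $M_{M_{i-1}}(H_i)=\bigcap_r N_{M_{i-1}}(P_{i,r})$ for a fixed Sylow system $\{P_{i,r}\}$ of $H_i$. The factorisation $M_{i-1}=N_iM_i$ then comes from Hall's theorem, and the control of the intersection in (iv) is obtained by passing to the quotient by $B_i=\prod_r P'_{i,r}$: modulo $B_i$ the group $H_i$ becomes an $A$-group, and Taunt's result \cite[(4.4)]{T} gives that $N_i/B_i$ is an honest complement to the system normalizer, forcing $N_i\cap M_i\subseteq B_i$. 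The centrality in $F(M_{i-1})F(M_i)$ is then read off from the fact that each $P'_{i,r}$ sits in the centre of any Sylow $p_r$-subgroup of $M_{i-1}$ containing it (this is exactly where class $\le 2$ is used). In particular the iterated intersection in (iv) does \emph{not} collapse in the paper's construction; tracking it against the chain of $B_i$'s is the content of the inductive hypothesis.

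Your route, by contrast, hinges on a single unresolved claim: given solvable $M$ with class-$\le 2$ Sylow subgroups, produce a supplement $M'$ of $F(M)$ with $F(M)\cap M'\subseteq \Ze(F(M))\cap \Ze(F(M'))$. You correctly isolate this as ``the technical heart'', but you do not prove it. Two concrete gaps: first, the Gasch\"utz step requires that $O_p(M)/\Ze(O_p(M))$ be complemented in $P/\Ze(O_p(M))$ for a Sylow $p$-subgroup $P$, i.e.\ that a certain central extension of $P/O_p(M)$ by $O_p(M)/\Ze(O_p(M))$ splits; you assert the class-$2$ hypothesis ``controls'' this but give no argument, and central extensions of $p$-groups need not split. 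Second, and more seriously, even granting the complement, your sketch for $F(M)\cap M'\subseteq \Ze(F(M'))$ only gets as far as $[L_p,O_p(M')]\subseteq P'\cap L_p$ and then appeals to an unspecified compatibility of $M'$ with a Hall system to make this vanish; nothing forces $P'\cap L_p=1$. The paper's system-normalizer construction is precisely what provides this compatibility for free, because $M_i$ normalises each $P_{i,r}$ and the class-$2$ condition then makes $P'_{i,r}$ central in the relevant Sylow subgroups. Your proposal, as written, has identified the right difficulty but not supplied the mechanism that overcomes it.
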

\begin{proof}
	 Let $p_1,\dots ,p_m$ be the distinct prime divisors of $|G|$. In this proof, for every subgroup $S$ of $G$, a Sylow $p_r$-subgroup of $S$ will mean a $p_r$-subgroup of $S$ of cardinality $p_r^t$, where $|S|=p_r^tk$ and $p_r$ is not a divisor of $k$. Note that if $p_r$ is not a divisor of $|S|$, then $\{ 1\}$ is a Sylow $p_r$-subgroup of $S$.
	 
	If $G$ is nilpotent, then  we may put $k=1$, $N_1=G$ and $M_1=\{ 1\}$ and the result follows in this case.
	
	Suppose that $G$ is not nilpotent. Let $\pi_1\colon G\longrightarrow G/F(G)$ be the natural map. Let $H_1=\pi_1^{-1}(\Ze(F(G/F(G))))$. Note that $H_1$ is a normal non-nilpotent subgroup of $G$, $F(G)$ is a proper subgroup of $H_1$ such that $H_1/F(G)$ is abelian. Let $N_1=H'_1$ be the derived subgroup of $H_1$.	Then $N_1\subseteq F(G)$.
	Let $P_{1,1},\dots, P_{1,m}$ be a Sylow system of $H_1$, that is $H_1=P_{1,1}\cdots P_{1,m}$, $P_{1,r}$ is a Sylow $p_r$-subgroup of $H_1$ and $P_{1,j}P_{1,r}=P_{1,r}P_{1,j}$, for all $j,r$. Then $M_1=M_G(H_1)=N_G(P_{1,1})\cap\dots \cap N_G(P_{1,m})$ is a system normalizer of $H_1$ relative to $G$.  Thus by a result of Hall, $G=M_1N_1$, see \cite[page 24]{T}. Since $H_1$ is not nilpotent, $M_1\neq G$. 
	
	Since $P_{1,r}$ has nilpotency class at most $2$, the derived subgroup $P'_{1,r}$ of $P_{1,r}$ is central in $P_{1,r}$. Furthermore, $P'_{1,r}\subseteq N_1\subseteq F(G)$. 
	Moreover, since $F(G)$ is nilpotent and normal in $G$, $F(G)$ is a direct product of its Sylow subgroups, so that $P'_{1,r}$ is central in $F(G)$. Since $H_1/F(G)$ is abelian and $H_1$ is normal in $G$, we have that $P_{1,r}F(G)$ is normal in $G$. We shall prove that $P'_{1,r}$ is normal in $G$. Let $g\in G$. Since $P_{1,r}$ is a Sylow subgroup of $P_{1,r}F(G)$, there exists $h\in F(G)$ such that $gP_{1,r}g^{-1}=hP_{1,r}h^{-1}$. Hence $gP'_{1,r}g^{-1}=hP'_{1,r}h^{-1}=P'_{1,r}$, because $P'_{1,r}$ is central in $F(G)$. Hence $P'_{1,r}$ is normal in $G$.
	
	Let $B_1=P'_{1,1}\cdots P'_{1,m}$.  Then $B_1\subseteq N_1$ and $B_1$ is normal in $G$ and central in $F(G)$, because so is every $P_{1,r}'$. Let $\bar G=G/B_1$, $\bar H_1=H_1/B_1$, $\overline{F(G)}=F(G)/B_1$ and $\bar N_1=N_1/B_1$. Note that $\bar N_1$ is the derived subgroup of $\bar H_1$. Since $\bar H_1$ is an $A$-group and a normal subgroup of $\bar G$, by \cite[(4.4)]{T}, $\bar N_1$ is a complement in $\bar G$ to any system normalizer of $\bar H_1$ relative to $\bar G$. In particular $(M_1B_1)\cap N_1=B_1$. Hence, $N_1\cap M_1\subseteq B_1$.
	
	Let $N$ be a nilpotent subgroup of $M_1$. Let $b\in B_1\cap N$. There exist $x_1\in P'_{1,1},\dots ,x_m\in P'_{1,m}$ such that $b=x_1\cdots x_m$. Note that every $x_r$ is a power of $b$. Thus $x_r\in P'_{1,r}\cap N$.
	Since $N\subseteq M_1\subseteq N_G(P_{1,r})$, we have that $P_{1,r}N$ is a subgroup of $G$ and $P_{1,r}$ is normal in $P_{1,r}N$. If $P$ is the Sylow $p_r$-subgroup of $N$, then $P_{1,r}P$ is a Sylow $p_r$-subgroup of $P_{1,r}N$. Since $P_{1,r}P$ has nilpotency class at most $2$, $P'_{1,r}$ is central in $P_{1,r}P$. Hence $P'_{1,r}\subseteq C_G(P)$. Therefore $P'_{1,r}\cap N\subseteq \Ze(P)\subseteq \Ze(N)$.     
	 Hence $x_r\in P'_{1,r}\cap N$ is central in $N$. Thus $b$ is central in $N$.  It follows that $B_1\cap N\subseteq \Ze (N)$. In particular, $B_1\cap F(M_1)\subseteq \Ze (F(M_1))$. Since $N_1\cap M_1$ is a normal nilpotent subgroup of $M_1$ contained in $B_1$, we have  that 
	 $$N_1\cap M_1=B_1\cap M_1=B_1\cap F(M_1)\subseteq \Ze(F(M_1)).$$
    Since $B_1\subseteq \Ze(F(G))$, we have that $N_1\cap M_1\subseteq \Ze(F(G)F(M_1))$,  as desired.
	
	Next, suppose that $i\geq 1$ and we have constructed subgroups $G=M_0\supseteq M_1\supseteq\dots \supseteq M_i$, $M_l\neq M_{l+1}$ for all $0\leq l<i$ and nilpotent subgroups $N_1,\dots ,N_i$ of $G$ such that $M_1,\dots ,M_{i-1}$ are not nilpotent, $N_j$ is the derived subgroup of $H_j=\pi_j^{-1}(\Ze(F(M_{j-1}/F(M_{j-1}))))$, where $\pi_j\colon M_{j-1}\longrightarrow M_{j-1}/F(M_{j-1})$ is the natural map, $M_j=M_{M_{j-1}}(H_j)=N_{M_{j-1}}(P_{j,1})\cap\dots \cap N_{M_{j-1}}(P_{j,m})$ 
	is a system normalizer of $H_j$ relative to $M_{j-1}$, where $P_{j,r}$ is a Sylow $p_r$-subgroup of $H_j$ and $P_{j,1},\dots ,P_{j,m}$ is a Sylow system of $H_j$, for all $1\leq j\leq i$, $B_j=P'_{j,1}\cdots P'_{j,m}$  and
	\begin{itemize}
		\item[(i)] $G=N_1\cdots N_lM_l$, for all $1\leq l\leq i$,
		\item[(ii)] $M_{l-1}=N_lM_l$, for all $1\leq l\leq i$,
		\item[(iii)] $N_l$ is normal in $M_{l-1}$, for all $1\leq l\leq i$,
		\item[(iv)] $((\dots ((N_1\cap M_1)N_2)\cap \dots\cap M_{l-1})N_l)\cap M_l$ is normal in $M_l$ and a central subgroup of $F(M_{l-1})F(M_l)$, for all $1\leq l\leq i$.
	\end{itemize} 
    We also assume that
	$$((\dots ((N_1\cap M_1)N_2)\cap \dots\cap M_{l-1})N_l)\cap M_l=((\dots ((B_1\cap M_1)B_2)\cap \dots\cap M_{l-1})B_l)\cap M_l$$
	and that the Sylow $p_r$-subgroup of 
	$$((\dots ((B_1\cap M_1)B_2)\cap \dots\cap M_{l-1})B_l)\cap M_l$$
	is contained in the $p_r$-subgroup $P'_{1,r}P'_{2,r}\cdots P'_{l,r}$ of $G$, for all $1\leq l\leq i$ and $1\leq r\leq m$. Note that this holds for $i=1$.
	 
	If $M_i$ is nilpotent, then we define $k=i+1$, $N_{i+1}=M_i$ and $M_{i+1}=\{ 1\}$ and the result follows in this case.
	
	Suppose that $M_i$ is not nilpotent.  
	
	Let $\pi_{i+1}\colon M_i\longrightarrow M_i/F(M_i)$ be the natural map. Let $$H_{i+1}=\pi_{i+1}^{-1}(\Ze(F(M_i/F(M_i)))).$$ Note that $H_{i+1}$ is a normal subgroup of $M_i$, $F(M_i)$ is a proper subgroup of $H_{i+1}$ such that $H_{i+1}/F(M_i)$ is abelian.  Let $N_{i+1}=H'_{i+1}$ be the derived subgroup of $H_{i+1}$.	Then $N_{i+1}\subseteq F(M_i)$.
	Let $P_{i+1,1},\dots, P_{i+1,m}$ be a Sylow system of $H_{i+1}$ such that $P_{i+1,j}$ is a Sylow $p_j$-subgroup of $H_{i+1}$. Then $M_{i+1}=M_{M_i}(H_{i+1})=N_{M_i}(P_{i+1,1})\cap\dots \cap N_{M_i}(P_{i+1,m})$ is a system normalizer of $H_{i+1}$ relative to $M_i$.  Then by a result of Hall, $M_i=M_{i+1}N_{i+1}$, see \cite[page 24]{T}. Hence $N_{i+1}$ is a normal subgroup of $M_i$ and
	 $$G=N_1\cdots N_iM_i=N_1\cdots N_iM_{i+1}N_{i+1}=N_1\cdots N_{i+1}M_{i+1}.$$
	
	Since $F(M_i)$ is the maximal nilpotent normal subgroup of $M_i$, we have that $H_{i+1}$ is not nilpotent. Hence $M_{i+1}\neq M_i$  (as otherwise all $P_{i+1,j}$ are normal subgroups in $M_i$, hence in $H_{i+1}$, and $H_{i+1}$ would be nilpotent).

	Let $B_{i+1}=P'_{i+1,1}\cdots P'_{i+1,m}$. Similarly as we have proved for $B_1$, one shows that $B_{i+1}\subseteq N_{i+1}$, $B_{i+1}$ is normal in $M_i$ and $B_{i+1}$ is a central subgroup of $F(M_i)$.

	Since,  by (iv), 
	$$((\dots ((N_1\cap M_1)N_2)\cap \dots\cap M_{i-1})N_i)\cap M_i$$
	is a normal subgroup of $M_i$ contained in $\Ze(F(M_i))$, we have that 
	$$(((\dots ((N_1\cap M_1)N_2)\cap \dots\cap M_{i-1})N_i)\cap M_i)B_{i+1}$$
	is normal in $M_i$ and a central subgroup of $F(M_i)$. Since
	$$((\dots ((N_1\cap M_1)N_2)\cap \dots\cap M_{i-1})N_i)\cap M_i=((\dots ((B_1\cap M_1)B_2)\cap \dots\cap M_{i-1})B_i)\cap M_i$$
	 we have that
		\begin{eqnarray*}\lefteqn{(((\dots ((N_1\cap M_1)N_2)\cap \dots\cap M_{i-1})N_i)\cap M_i)B_{i+1}}\\
			&=&(((\dots ((B_1\cap M_1)B_2)\cap \dots\cap M_{i-1})B_i)\cap M_i)B_{i+1}\end{eqnarray*}
			is a central subgroup of $F(M_i)$.
			Let
			$$B=(((\dots ((B_1\cap M_1)B_2)\cap \dots\cap M_{i-1})B_i)\cap M_i)B_{i+1}.$$
    Since, by our inductive hypothesis, the Sylow $p_r$-subgroup of
		$$((\dots ((B_1\cap M_1)B_2)\cap \dots\cap M_{i-1})B_i)\cap M_i$$
		is contained in the $p_r$-subgroup $P'_{1,r}P'_{2,r}\cdots P'_{i,r}$ of $G$, for all $1\leq r\leq m$, 
	and since $B$ 
	is abelian, the Sylow $p_r$-subgroup of $B$ is contained in $P'_{1,r}P'_{2,r}\cdots P'_{i,r}P'_{i+1,r}$, which is a $p_r$-subgroup of $G$ because $P'_{i+1,r}\subseteq M_l\subseteq N_{G}(P_{l,r})$, for all $1\leq l\leq i$.	
     Let $\overline M_i=M_i/B$, $$\overline H_{i+1}=((((\dots ((B_1\cap M_1)B_2)\cap \dots\cap M_{i-1})B_i)\cap M_i)H_{i+1})/B,$$ $\overline{F(M_i)}=F(M_i)/B$ and $$\overline N_{i+1}=((((\dots ((B_1\cap M_1)B_2)\cap \dots\cap M_{i-1})B_i)\cap M_i)N_{i+1})/B.$$ 
	Note that $\overline N_{i+1}$ is the derived subgroup of $\overline H_{i+1}$. Since $\overline H_{i+1}$ is an $A$-group and a normal subgroup of $\overline M_i$, by \cite[(4.4)]{T}, $\overline N_{i+1}$ is a complement in $\overline M_i$ to any system normalizer of $\overline H_{i+1}$ relative to $\overline M_i$. In particular $$(M_{i+1}B)\cap ((((\dots ((B_1\cap M_1)B_2)\cap \dots\cap M_{i-1})B_i)\cap M_i)N_{i+1})=B.$$ 
	Hence, $$((((\dots ((B_1\cap M_1)B_2)\cap \dots\cap M_{i-1})B_i)\cap M_i)N_{i+1})\cap M_{i+1}\subseteq B.$$
	Hence
	\begin{eqnarray*}\lefteqn{((\dots ((N_1\cap M_1)N_2)\cap \dots\cap M_{i})N_{i+1})\cap M_{i+1}}\\
		&=&B\cap M_{i+1}=((\dots ((B_1\cap M_1)B_2)\cap \dots\cap M_{i})B_{i+1})\cap M_{i+1},\end{eqnarray*} 
	and its Sylow $p_r$-subgroup is contained in $P'_{1,r}P'_{2,r}\cdots P'_{i+1,r}$.
	
	Let $N$ be a nilpotent subgroup of $M_{i+1}$. Let $b\in N\cap B$. There exist $x_{j,1}\in P'_{j,1},\dots ,x_{j,m}\in P'_{j,m}$, for $1\leq j\leq i+1$ such that 
	$$b=x_{1,1}\cdots x_{i+1,1}\cdots x_{1,m}\cdots x_{i+1,m}.$$ 
	Since $B$ is abelian, $x_{1,r}x_{2,r}\cdots x_{i+1,r}$ is a power of $b$ and thus $x_{1,r}x_{2,r}\cdots x_{i+1,r}\in N\cap B$. We shall prove that $x_{1,r}x_{2,r}\cdots x_{i+1,r}\in \Ze(N)$.  Since $N\subseteq N_G(P_{1,r})\cap\dots \cap N_G(P_{i+1,r})$ and $P_{j+1,r}\subseteq N_G(P_{j,r})$, we have that $P_{1,r}\cdots P_{i+1,r}N$ is a subgroup of $G$ and $P_{1,r}\cdots P_{i+1,r}$ is normal in $P_{1,r}\cdots P_{i+1,r}N$. If $Q$ is the Sylow $p_r$-subgroup of $N$, then $P_{1,r}\cdots P_{i+1,r}Q$ is a Sylow $p_r$-subgroup of $P_{1,r}\cdots P_{i+1,r}N$. Since $P_{1,r}\cdots P_{i+1,r}Q$ has nilpotency class at most $2$, $P'_{1,r}\cdots P'_{i+1,r}$ is central in $P_{1,r}\cdots P_{i+1,r}Q$. Hence $P'_{1,r}\cdots P'_{i+1,r}\subseteq C_G(Q)$. Therefore 
	$$(P'_{1,r}\cdots P'_{i+1,r})\cap N\subseteq \Ze(Q)\subseteq \Ze(N).$$     
	Hence $x_{1,r}\cdots x_{i+1,r}\in (P'_{1,r}\cdots P'_{i+1,r})\cap N$ is central in $N$. Thus $b$ is central in $N$. So it follows that $N\cap B\subseteq \Ze (N)$. Since $((((\dots ((N_1\cap M_1)N_2)\cap \dots\cap M_{i-1})N_i)\cap M_i)N_{i+1})\cap M_{i+1}=B\cap M_{i+1}$ is a normal nilpotent subgroup of $M_{i+1}$ contained in $B$, this implies that
	$$((((\dots ((N_1\cap M_1)N_2)\cap \dots\cap M_{i-1})N_i)\cap M_i)N_{i+1})\cap M_{i+1}\subseteq \Ze(F(M_{i+1})).$$
	Since $B$ is central in $F(M_i)$, we get that
	$$((((\dots ((N_1\cap M_1)N_2)\cap \dots\cap M_{i-1})N_i)\cap M_i)N_{i+1})\cap M_{i+1}\subseteq \Ze(F(M_i)F(M_{i+1})).$$
	Hence, the inductive step is proved. Since the inductive procedure leads to a strictly decreasing chain of subgroups $G=M_0\supset \dots \supset M_i$ if $M_l$ is not nilpotent for $0\leq l<i$, it has to terminate at the trivial subgroup. Therefore the result follows.	  
\end{proof}

Now we are ready to generalize Theorem \ref{tower} without the hypothesis on the Sylow tower property. We note that the proofs of the following results are based on a much more complicated construction.

\begin{theorem}\label{main}
	Let $G$ be a finite group of odd order such that all Sylow subgroups have nilpotency class at most $2$. Then $G$ is an IYB group. 
\end{theorem}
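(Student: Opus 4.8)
The plan is to feed the structural decomposition of Theorem~\ref{main1} into an inductive construction of a left brace on $G$, building it up along the subgroup chain $G=M_0\supseteq M_1\supseteq\dots\supseteq M_k=\{1\}$ and gluing in one nilpotent factor $N_i$ at a time. First I would record the reductions available for odd order. Since $\ord{G}$ is odd, $G$ is solvable, so Theorem~\ref{main1} applies and yields the subgroups $N_i$ and $M_i$. Moreover every nilpotent subgroup $N$ of $G$ is the direct product of its Sylow subgroups, each of which sits inside a Sylow subgroup of $G$ and hence has nilpotency class at most $2$; thus $N$ itself has class at most $2$, and its derived subgroup has odd order. Consequently Lemma~\ref{aut} equips every such $N$---in particular every $N_i$---with a canonical two-sided brace $(N,+,\cdot)$ satisfying $\Aut(N)=\Aut(N,+,\cdot)$. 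This identity is the engine of the argument: any conjugation of $N_i$ by an element of $M_i$ (legitimate since $N_i$ is normal in $M_{i-1}$ by (iii)) is automatically an automorphism of the additive group $(N_i,+)$.

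The induction runs upward from the bottom of the chain. For the base, $M_{k-1}=N_kM_k=N_k$ is nilpotent of class at most $2$, hence an IYB group. Assuming a left brace has been constructed on $M_i$, I want to produce one on $M_{i-1}=N_iM_i$ (property (ii)). When $N_i\cap M_i=\{1\}$ this is exactly the inner semidirect product of braces of Proposition~\ref{nil}, with $M_i$ acting on $N_i$ by brace automorphisms. The new phenomenon, absent under the Sylow tower hypothesis of Theorem~\ref{tower} (where Schur--Zassenhaus forces trivial intersections), is that $Z:=N_i\cap M_i$ need not be trivial. I would therefore form a \emph{central amalgamated product of braces}: externally build the semidirect product brace $N_i\rtimes M_i$, with additive group $(N_i,+)\times(M_i,+)$, and then factor out the anti-diagonal copy of $Z$, namely $D=\{(z,-z):z\in Z\}$. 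Set-theoretically $(a,b)\mapsto ab$ identifies $(N_i\rtimes M_i)/D$ with $M_{i-1}=N_iM_i$, and the quotient inherits an abelian additive group of the correct order $\ord{N_i}\,\ord{M_i}/\ord{Z}$.

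For this amalgamation to be legitimate I must know that $Z$ is a \emph{trivial} subbrace (additive and multiplicative structures coinciding, with matching inverses) of both factors, so that $D$ is simultaneously the additive and the multiplicative amalgamating subgroup and is a brace ideal. On the $N_i$-side this is free: by (iv), $N_i\cap M_i$ lies in the iterated intersection, which is central in $F(M_{i-1})F(M_i)$, whence $Z\subseteq\Ze(N_i)$, and central elements of the Lemma~\ref{aut} brace satisfy $x+y=xy[y,x]^{1/2}=xy$. On the $M_i$-side I would carry the strengthened inductive hypothesis that the constructed brace on $M_i$ is trivial on $\Ze(F(M_i))$; since (iv) likewise gives $Z\subseteq\Ze(F(M_i))$, this makes $Z$ a trivial subbrace of $M_i$ as well. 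Granting this, verifying \eqref{braceeq} for the amalgamated product reduces to the brace axioms for $N_i$ and $M_i$ separately, the additivity of the $M_i$-action on $N_i$ (Lemma~\ref{aut}), and the compatibility on the central overlap $Z$.

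The main obstacle is propagating the invariant, i.e.\ showing that the newly built brace on $M_{i-1}$ is again trivial on $\Ze(F(M_{i-1}))$. This is exactly what the elaborate iterated-intersection condition (iv) of Theorem~\ref{main1}---together with the fact established there that the relevant central subgroups are contained in the products $P'_{1,r}\cdots P'_{i,r}$ of derived subgroups of the Sylow subgroups---is designed to control: these are the commutator-type elements that lie centrally in every Fitting subgroup and that the canonical braces render trivial. Tracking how $\Ze(F(M_{i-1}))$ decomposes through the factors $N_i$ and $M_i$, and checking that each new central element is a product of such trivial-subbrace pieces, is the delicate bookkeeping; everything else is the routine verification that a central amalgamated product of braces is again a left brace. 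Once the induction reaches $M_0=G$, the resulting structure exhibits $G$ as an IYB group.
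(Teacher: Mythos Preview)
Your strategy---building the brace inductively down the chain $M_{k-1},M_{k-2},\ldots,M_0=G$ by forming at each step a semidirect-product brace $N_i\rtimes M_i$ and collapsing the anti-diagonal of $Z=N_i\cap M_i$---is a genuinely different packaging from the paper's. The paper does not induct: it writes down the sum on $G$ globally in one stroke,
\[
(x_1\cdots x_k)+(y_1\cdots y_k)=(x_1+y_1)\cdots(x_k+y_k)
\]
with $x_i+y_i=x_iy_i[y_i,x_i]^{1/2}$ on each $N_i$, and then spends the work on showing this is \emph{well-defined} (the factorization $g=x_1\cdots x_k$ is not unique). Condition (iv) is used to prove that the discrepancies $z_i^{-1}x_i$ between two factorizations lie in $\Ze(N_i)$, whence $[y_i,x_i]=[t_i,z_i]$ and the two candidate sums coincide; the brace identity is then a direct chain of equalities. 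Your amalgamation, if it succeeds, produces exactly the same brace, so the two routes are not competing constructions---only competing verifications.

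There is, however, a real gap in your verification. For the anti-diagonal $D=\{(z,z^{-1}):z\in Z\}$ to be an \emph{ideal} of $N_i\rtimes M_i$ it is not enough that $Z$ be a trivial subbrace of each factor. In the semidirect-product brace one has
\[
\lambda_{(a,b)}(z,z^{-1})=\bigl(\lambda_a^{N_i}(bzb^{-1}),\ \lambda_b^{M_i}(z^{-1})\bigr).
\]
The first coordinate is $bzb^{-1}$ because central elements of $N_i$ are $\lambda$-fixed in the Ault--Watters brace; but for the pair to land back in $D$ you need $\lambda_b^{M_i}(z^{-1})=bz^{-1}b^{-1}$ for \emph{every} $b\in M_i$. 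Your invariant ``the brace on $M_i$ is trivial on $\Ze(F(M_i))$'' only constrains $z_1+z_2$ for $z_1,z_2\in Z$; it says nothing about $\lambda_b^{M_i}(z)$ when $b\notin Z$. The invariant you actually need is that $\lambda_b^{M_i}$ acts by conjugation on the iterated intersection $C_i$ of (iv) for all $b\in M_i$, and \emph{that} must then be propagated through the amalgamation---which is exactly the step you label ``delicate bookkeeping'' and leave open. The paper's global definition absorbs this difficulty into the single well-definedness computation, where the full iterated form of (iv) (not merely $N_i\cap M_i\subseteq\Ze(F(M_{i-1})F(M_i))$) is used at every layer simultaneously; that is the advantage the direct approach buys.
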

 
 \begin{proof} By Feit--Thompson theorem, $G$ is solvable. By Theorem \ref{main1}, there exist nilpotent subgroups $N_1,\dots , N_k$ and subgroups $G=M_0\supseteq M_1\supseteq\dots\supseteq M_{k-1}\supseteq M_k=\{ 1\}$ such that
 	\begin{itemize}
 		\item[(i)] $G=N_1\cdots N_iM_i$, for all $1\leq i\leq k$,
 		\item[(ii)] $M_{i-1}=N_iM_i$, for all $1\leq i\leq k$,
 		\item[(iii)] $N_i$ is normal in $M_{i-1}$, for all $1\leq i\leq k$,
 		\item[(iv)] $((\dots ((N_1\cap M_1)N_2)\cap \dots\cap M_{i-1})N_i)\cap M_i$ is normal in $M_i$ and a central subgroup of $F(M_{i-1})F(M_i)$, for all $1\leq i\leq k$.	
 	\end{itemize} 
 	In particular $G=N_1\cdots N_k$  and $M_i = N_{i+1}\cdots N_{k}=  N_k\cdots N_{i+1}$ for $i< k-1$. We define a sum on each $N_i$ by the rule
 	\begin{equation} \label{N1} x+y=xy[y,x]^{\frac{1}{2}},
    \end{equation}
 	for all $x,y\in N_i$. Note that $(N_i,+,\cdot)$ is a left brace by \cite{AW}.  Thus, we may assume that $k>1$. We define a sum on $G$ by the rule
 	\begin{equation} \label{Nk} (x_1\cdots x_k)+(y_1\cdots y_k)=(x_1+y_1)\cdots (x_k+y_k),
    \end{equation} 
 	for all $x_1,y_1\in N_1,\,\dots ,\, x_k,y_k\in N_k$. First, we shall prove that this sum is well-defined.
 	So assume that $x_1,y_1,z_1,t_1\in N_1,\,\dots ,\, x_k,y_k,z_k,t_k\in N_k$ are elements such that
 	$$x_1\cdots x_k=z_1\cdots z_k\mbox{ and }y_1\cdots y_k=t_1\cdots t_k.$$
 	Note that,  by (iv),
 	$$
    z_1^{-1}x_1=z_2\cdots z_kx_k^{-1}\cdots x_2^{-1}\in N_1\cap M_1\subseteq \Ze(F(G)F(M_1)),$$
     and $z_2 \in N_2\subseteq F(M_1)$, so that $z_{1}^{-1}x_1$ commutes with $z_2$. Hence
 	\begin{eqnarray*}\lefteqn{z_1^{-1}x_1z_2^{-1}x_2}\\
 		&=&z_2^{-1}z_1^{-1}x_1x_2\\
 		&=&z_3\cdots z_kx_k^{-1}\cdots x_3^{-1}\in ((N_1\cap M_1)N_2)\cap M_2\subseteq\Ze(F(M_1)F(M_2)).
 		\end{eqnarray*} 
 	Then, using induction it is easy to see that $z_{i-1}^{-1}\cdots z_1^{-1}x_1\cdots x_{i-1}$ commutes with  $z_i$ and
 	\begin{eqnarray*}\lefteqn{z_1^{-1}x_1z_2^{-1}x_2\cdots z_i^{-1}x_i}\\
 		&=&z_i^{-1}\cdots z_1^{-1}x_1\cdots x_i\\
 		&=&z_{i+1}\cdots z_kx_k^{-1}\cdots x_{i+1}^{-1}\\
        &\in& ((\dots ((N_1\cap M_1)N_2)\cap \dots \cap M_{i-1})N_i)\cap M_i\subseteq\Ze(F(M_{i-1})F(M_i)),
 		\end{eqnarray*} 
 	for all $1\leq i\leq k$. In particular,
 	\begin{eqnarray} \label{CGNN} 
    \lefteqn{z_1^{-1}x_1z_2^{-1}x_2\cdots z_i^{-1}x_i} \nonumber\\
    &=&z_i^{-1}\cdots z_1^{-1}x_1\cdots x_i=z_{i+1}\cdots z_kx_k^{-1}\cdots x_{i+1}^{-1}\in C_G(N_{i}N_{i+1}),
       \end{eqnarray}
 	for all $1\leq i<k$. Similarly we get
 	 	\begin{eqnarray} \label{CGNN2} 
        \lefteqn{t_1^{-1}y_1t_2^{-1}y_2\cdots t_i^{-1}y_i} \nonumber\\
          &=&t_i^{-1}\cdots t_1^{-1}y_1\cdots y_i=t_{i+1}\cdots t_ky_k^{-1}\cdots y_{i+1}^{-1}\in C_G(N_{i}N_{i+1}),
        \end{eqnarray}
 	for all $1\leq i<k$.
 	We have that
 	\begin{eqnarray*}
 		\lefteqn{(x_1+y_1)\cdots (x_k+y_k)}\\
 		&=&x_1y_1[y_1,x_1]^{\frac{1}{2}}x_2y_2[y_2,x_2]^{\frac{1}{2}}\cdots x_ky_k[y_k,x_k]^{\frac{1}{2}}\\
 		&=&z_1(z_2\cdots z_kx_k^{-1}\cdots x_2^{-1})t_1(t_2\cdots t_ky_k^{-1}\cdots y_2^{-1})[y_1,x_1]^{\frac{1}{2}}x_2y_2[y_2,x_2]^{\frac{1}{2}}\\
 		&&\cdots x_ky_k[y_k,x_k]^{\frac{1}{2}}\\
 		&=& z_1t_1[y_1,x_1]^{\frac{1}{2}}(z_2\cdots z_kx_k^{-1}\cdots x_2^{-1})x_2(t_2\cdots t_ky_k^{-1}\cdots y_2^{-1})y_2[y_2,x_2]^{\frac{1}{2}}\\
 		&&\cdots x_ky_k[y_k,x_k]^{\frac{1}{2}}\\
 		&=&\cdots =z_1t_1[y_1,x_1]^{\frac{1}{2}}z_2t_2[y_2,x_2]^{\frac{1}{2}}\cdots z_kt_k[y_k,x_k]^{\frac{1}{2}}.
 	\end{eqnarray*}
    In the third equality above we use the fact that the elements $t_1, [y_1,x_{1}]^{-1} \in N_1$ and $x_2\in N_2$ commute with $z_2\cdots z_kx_k^{-1}\cdots x_2^{-1}$ and $t_2\cdots t_ky_k^{-1}\cdots y_2^{-1}$, which is a consequence of (\ref{CGNN}) and (\ref{CGNN2}) for $i=1$. Then, in the next equalities one applies  (\ref{CGNN}) and (\ref{CGNN2}) for the subsequent values of $i$.
 	But (\ref{CGNN}) implies also that
 	\begin{eqnarray*}
 	    \lefteqn{z_i^{-1}x_i
        =(x_{i-1}^{-1}z_{i-1}\cdots x_1^{-1}z_1)(z_{i+1}\cdots z_kx_k^{-1}\cdots x_{i+1}^{-1})}\\
        &\in& (C_G(N_{i-1}N_i)C_G(N_iN_{i+1}))\cap N_i\subseteq\Ze(N_i),
        \end{eqnarray*}
 	and similarly $t_i^{-1}y_i\in\Ze(N_i)$. Hence $[y_i,x_i]=[t_i,z_i]$, for all $1\leq i\leq k$. Therefore
 	$$(x_1+y_1)\cdots (x_k+y_k)=(z_1+t_1)\cdots (z_k+t_k)$$
 	and the sum on $G$ is well-defined.
 	
Note that the restriction of the sum in $G$ to $N_i$ is exactly the sum on $N_i$ defined previously. Since each $(N_i,+,\cdot)$ is a left brace, it is easy to see that $(G,+)$ is an abelian group,  in particular $-x_1\cdots x_n = (-x_1)\cdots (-x_n)$.
Let $a_1,b_1,c_1\in N_1\,\dots ,\, a_k,b_k,c_k\in N_k$. We have that
 	\begin{eqnarray}\label{brace} 
 		\lefteqn{a_1\cdots a_k(b_1\cdots b_k+c_1\cdots c_k)}\\
 		&=&a_1\cdots a_k(b_1+c_1)\cdots (b_k+c_k)\notag\\
 		&=&a_1\cdots a_kb_1c_1[c_1,b_1]^{\frac{1}{2}}\cdots b_kc_k[c_k,b_k]^{\frac{1}{2}}\notag\\
 		&=&a_1(a_2\cdots a_kb_1c_1[c_1,b_1]^{\frac{1}{2}}(a_2\cdots a_k)^{-1})\notag \\
 		&&\cdot a_2\cdots a_kb_2c_2[c_2,b_2]^{\frac{1}{2}}\cdots b_kc_k[c_k,b_k]^{\frac{1}{2}}\notag \\
 		&=&a_1(a_2\cdots a_kb_1c_1[c_1,b_1]^{\frac{1}{2}}(a_2\cdots a_k)^{-1})\notag \\
 		&&\cdot a_2(a_3\cdots a_k)b_2c_2[c_2,b_2]^{\frac{1}{2}}(a_3\cdots a_k)^{-1}\cdots a_kb_kc_k[c_k,b_k]^{\frac{1}{2}}\notag \\
 		&=&a_1(a_2\cdots a_kb_1(a_2\cdots a_k)^{-1}+a_2\cdots a_kc_1(a_2\cdots a_k)^{-1})\notag \\
 		&&\cdot a_2(a_3\cdots a_kb_2(a_3\cdots a_k)^{-1}+a_3\cdots a_kc_2(a_3\cdots a_k)^{-1})\cdots a_k(b_k+c_k)\notag \\
 		&=&(a_1a_2\cdots a_kb_1(a_2\cdots a_k)^{-1}+a_1a_2\cdots a_kc_1(a_2\cdots a_k)^{-1}-a_1)\notag \\
 		&&\cdot (a_2a_3\cdots a_kb_2(a_3\cdots a_k)^{-1}+a_2a_3\cdots a_kc_2(a_3\cdots a_k)^{-1}-a_2)\notag \\
 		&&\cdots  (a_kb_k+a_kc_k-a_k)\notag \\
 		&=&a_1a_2\cdots a_kb_1b_2\cdots b_k+a_1a_2\cdots a_kc_1c_2\cdots c_k-a_1a_2\cdots a_k\notag 		
 		\end{eqnarray} 
        where the last three equalities follow by (\ref{N1}), because $N_i$ are left braces, and by (\ref{Nk})  and Lemma~\ref{aut} (as conjugation in $N_{i}\cdots N_{k}$ induces an automorphism on the normal subgroup $N_i$), respectively.
 	   Hence $(G,+,\cdot)$ is a left brace and the result follows.  
 	   \end{proof}

Consider the group $G=(\Z/(7))\rtimes (\Z/(3))$, where
$$(a_1,a_2)(b_1,b_2)=(a_1+2^{a_2}b_1,a_2+b_2),$$
for all $a_1,b_1\in\Z/(7)$ and all $a_2,b_2\in\Z/(3)$. In this group, using the notation of Theorem \ref{main1}, we have that $F(G)=(\Z/(7))\times \{ 0\}$, $H_1=G$, $N_1=F(G)$. We take the following Sylow system of $H_1$:
$$P_{1,1}=F(G),\quad P_{1,2}=\{ 0\}\times \Z/(3).$$
Then $M_1=N_G(P_{1,1})\cap N_G(P_{1,2})=G\cap P_{1,2}=P_{1,2}$. Hence $N_2=M_1$ and $M_2=\{ (0,0)\}$. Note that $N_1$ and $N_2$ are abelian in this case. Now, the sum on $G$, defined as in Theorem \ref{main1}, is
$$(a_1,a_2)+(b_1,b_2)=(a_1+b_1,a_2+b_2),$$
for all $a_1,b_1\in \Z/(7)$ and all $a_2,b_2\in\Z/(3)$. Thus, the structure of left brace on $G$ is the semidirect product (see \cite[Section 3]{Csurvey}) of the trivial braces $Z/(7)$  by $Z/(3)$. Now we have
$$((0,1)+(0,1))(1,0)+(1,0)=(0,2)(1,0)+(1,0)=(5,2)$$
and
$$(0,1)(1,0)+(0,1)(1,0)=(2,1)+(2,1)=(4,2)\neq (5,2).$$
Hence $G$ is not a two-sided brace, in contrast to the situation described in Lemma~\ref{aut} and applied in the subsequent proofs.   

\begin{theorem}\label{maineven}
	Let $G$ be a finite solvable group of even order such that all Sylow subgroups have nilpotency class at most $2$ and all Sylow $2$-subgroups are abelian. Then $G$ is an IYB group. 
\end{theorem}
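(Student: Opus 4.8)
The plan is to run the proof of Theorem~\ref{main} essentially verbatim, replacing the appeal to the Feit--Thompson theorem (which was only used there to secure solvability) by the hypothesis that $G$ is solvable, and then to check that the one place where oddness of $|G|$ entered the argument --- namely the brace structure on each $N_i$ supplied by \cite{AW} and by Lemma~\ref{aut} --- survives under the weaker hypothesis that the Sylow $2$-subgroups are abelian. First I would invoke Theorem~\ref{main1} directly: since $G$ is solvable with all Sylow subgroups of nilpotency class at most $2$, there exist nilpotent subgroups $N_1,\dots,N_k$ and a chain $G=M_0\supseteq\dots\supseteq M_k=\{1\}$ satisfying (i)--(iv). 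This step uses no parity assumption.

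The crucial observation is that each derived subgroup $N_i'$ has odd order. Indeed $N_i$ is nilpotent, hence the direct product of its Sylow subgroups; writing $N_i=(N_i)_2\times O_i$, where $(N_i)_2$ is the Sylow $2$-subgroup and $O_i$ is the product of the Sylow $p$-subgroups for odd $p$, the factor $(N_i)_2$ is contained in a Sylow $2$-subgroup of $G$ and is therefore abelian by hypothesis. Hence $N_i'=O_i'$ has odd order, and moreover $N_i$ itself has nilpotency class at most $2$ (each Sylow subgroup of $N_i$ lies in a Sylow subgroup of $G$, so has class at most $2$). Consequently every element of $N_i'$ has a unique square root, the formula $x+y=xy[y,x]^{\frac{1}{2}}$ equips $N_i$ with a left brace structure by \cite{AW}, and Lemma~\ref{aut} yields $\Aut(N_i)=\Aut(N_i,+,\cdot)$ --- exactly the two ingredients needed for the construction in Theorem~\ref{main}.

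With these brace structures in hand I would define the sum on $G$ by (\ref{Nk}) and copy the rest of the proof of Theorem~\ref{main}. The verification that (\ref{Nk}) is well-defined rests solely on conditions (i)--(iv) of Theorem~\ref{main1}, in particular on the centrality statements placing the correction terms $z_{i}^{-1}\cdots z_1^{-1}x_1\cdots x_i$ in $\Ze(F(M_{i-1})F(M_i))$ and hence in the relevant centralizers $C_G(N_iN_{i+1})$; none of this used $|G|$. Likewise the brace identity (\ref{brace}) uses only that each $(N_i,+,\cdot)$ is a brace and that conjugation in $N_i\cdots N_k$ restricts to a \emph{brace} automorphism of the normal subgroup $N_i$ via Lemma~\ref{aut}; both facts hold once $N_i'$ has odd order. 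The only genuinely new point --- and the sole place where the even-order case diverges --- is the odd-order claim for $N_i'$, which I expect to be the main (though minor) obstacle, the remainder being a word-for-word repetition of the argument for Theorem~\ref{main}. Thus $(G,+,\cdot)$ is a left brace and $G$ is an IYB group.
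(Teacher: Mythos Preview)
Your proposal is correct and follows the same route as the paper: the paper's own proof simply says that the argument of Theorem~\ref{main} goes through verbatim, and notes that on the Sylow $2$-subgroup of each $N_i$ the sum reduces to $a+b=ab[b,a]^{1/2}=ab$ because that subgroup is abelian. Your formulation---that $N_i'$ has odd order, so unique square roots exist and both \cite{AW} and Lemma~\ref{aut} apply---is the same observation stated at the level of $N_i'$ rather than at the level of the Sylow $2$-part, and you supply more explicit justification than the paper does.
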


\begin{proof}
	The proof is the same as the proof of Theorem \ref{main}. Note that the restriction of the sum on the Sylow $2$-subgroup of each $N_i$ is defined by
	$$a+b=ab[a,b]^{\frac{1}{2}}=ab$$
	because all Sylow $2$-subgroups of $G$ are abelian by hypothesis.
\end{proof}

 We conclude with a special case of Question~\ref{general} that seems natural in the context of our main results.
\begin{question}\label{Q1}
	Let $G$ be a finite solvable group such that all Sylow subgroups have nilpotency class at most $2$. Is $G$ an IYB group?
\end{question}
 
 Theorem \ref{main} shows that if the order of $G$ is odd, then the answer is affirmative. Theorem \ref{maineven} gives an affirmative answer if the Sylow $2$-subgroups of $G$ are abelian.
Let $G$ be a finite solvable group such that all Sylow subgroups have nilpotency class at most $2$. If a Sylow $2$-subgroup $P$ of $G$ admits a structure of left brace such that $\Aut(P)=\Aut(P,+,\cdot)$ then an approach as in the proof of Theorem~\ref{main} allows to prove that $G$ is an IYB group (this condition is used in the fifth equality of (\ref{brace})). On the other hand, all left braces with multiplicative group isomorphic to the dihedral group of order $8$ are described in \cite{B2015} and a direct verification of these braces shows that in all of them there exists a multiplicative automorphism which is not an additive homomorphism. Therefore, another approach would be needed to solve Question~\ref{Q1} in full generality.

\section{Skew braces of nilpotent type}
Recall that a skew left brace, \cite{GV},  is a set $B$ with two binary operations $+$ and $\circ$, such that $(B,+)$ and $(B,\circ)$ are groups, the additive group and the multiplicative group of $B$ respectively, and for every $a,b,c\in B$,
\begin{equation}\label{skew}
	a\circ (b+c)=a\circ b-a+a\circ c.
	\end{equation} 
If $\aleph$ is a class of groups, we say that the skew left brace $B$ is of type $\aleph$ if the additive group of $B$ is in $\aleph$. In particular, skew left braces of abelian type are left braces.

The following result is due to Byott \cite{byott}.
\begin{theorem}\label{Byott}[Byott] The multiplicative group of every finite skew left brace of nilpotent type is solvable.
	\end{theorem}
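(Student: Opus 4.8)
The plan is to sidestep any analysis of the image of the $\lambda$-map inside $\Aut(B,+)$ (a group which need not be solvable, so the naive ``extension of solvable by solvable'' argument is unavailable) and instead to exhibit, for every prime $r$, a Hall $r'$-subgroup of the multiplicative group $(B,\circ)$. Solvability of $(B,\circ)$ would then follow at once from P.~Hall's characterization of finite solvable groups as exactly those that possess a $p$-complement for every prime $p$.

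First I would set up the $\lambda$-map of the skew left brace $B$: for $a\in B$ the map $\lambda_a\colon b\mapsto -a+a\circ b$ is an automorphism of $(B,+)$, and the defining identity (\ref{skew}) gives $a\circ b=a+\lambda_a(b)$ for all $a,b\in B$ (recall also that the additive and multiplicative identities coincide at $0$). The key elementary observation is then that any subgroup $L$ of the additive group $(B,+)$ which is $\lambda$-invariant, i.e.\ $\lambda_a(L)=L$ for all $a\in B$, is automatically a subgroup of the multiplicative group $(B,\circ)$: for $a,b\in L$ one has $a\circ b=a+\lambda_a(b)\in L$, and since $L$ is finite, contains $0$ and is closed under $\circ$, it is a subgroup of $(B,\circ)$. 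I would record this as the statement that every left ideal of $B$ is a multiplicative subgroup.

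Now the nilpotency hypothesis enters, and this is the only place it is used. Since $(B,+)$ is finite and nilpotent it is the internal direct product of its Sylow subgroups $N_{p_1},\dots,N_{p_m}$, each of which is characteristic in $(B,+)$. Fixing a prime $r$ dividing $|B|$, I would set $B_{\hat r}=\bigoplus_{q\neq r}N_q$, the Hall $r'$-subgroup of $(B,+)$. As a product of characteristic subgroups it is characteristic in $(B,+)$, hence $\lambda_a(B_{\hat r})=B_{\hat r}$ for every $a\in B$ (each $\lambda_a$ being an additive automorphism); by the previous paragraph $B_{\hat r}$ is therefore a subgroup of $(B,\circ)$. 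Its order is $|B|/r^{a_r}$, where $r^{a_r}$ is the exact power of $r$ dividing $|B|$, so $B_{\hat r}$ is a Hall $r'$-subgroup, equivalently an $r$-complement, of the multiplicative group.

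Carrying this out for every prime $r$ produces an $r$-complement in $(B,\circ)$ for each $r$ (for $r\nmid|B|$ the whole group serves), and P.~Hall's theorem then yields that $(B,\circ)$ is solvable. The main obstacle to watch is precisely the passage from the additive to the multiplicative world, namely the verification that the characteristic additive Hall subgroups $B_{\hat r}$ remain subgroups multiplicatively; this hinges entirely on their $\lambda$-invariance, which is what nilpotency of $(B,+)$ buys us (it forces the Sylow subgroups, and hence $B_{\hat r}$, to be characteristic). The one nontrivial external input is the converse direction of Hall's solvability criterion; it is worth emphasizing that with this route one needs no control whatsoever over the possibly non-solvable group $\Aut(B,+)$, and not even Burnside's $p^aq^b$-theorem.
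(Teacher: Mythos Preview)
Your argument is correct. The paper does not supply its own proof of this statement: Theorem~\ref{Byott} is simply attributed to Byott with a citation to \cite{byott}, so there is nothing in the paper to compare your proof against.

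For what it is worth, the line you take---observing that the additive Hall $r'$-subgroups are characteristic in the nilpotent group $(B,+)$, hence $\lambda$-invariant, hence multiplicative subgroups, and then invoking P.~Hall's solvability criterion---is essentially the argument in Byott's original paper (formulated there in the language of regular subgroups and Hopf--Galois structures rather than skew braces). Your write-up is self-contained and clean; the one cosmetic point is that the claim ``a product of characteristic subgroups is characteristic'' is true here for the trivial reason that in a finite nilpotent group the Hall $r'$-subgroup is the \emph{unique} subgroup of its order, but you might phrase it that way to avoid any momentary doubt.
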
 
Let $G$ be a nilpotent group. We define on $G$ a sum by the rule
$$a+b=ab,$$
for all $a,b\in G$. Then
$$a(b+c)=abc=aba^{-1}ac=ab-a+ac,$$
for all $a,b,c\in G$. Hence $(G,+,\cdot)$ is a skew left brace of nilpotent type. 

\begin{conjecture}
	Every finite solvable group is isomorphic to the multiplicative group of a skew left brace of nilpotent type.
\end{conjecture}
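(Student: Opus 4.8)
The plan is to follow the construction of Theorem~\ref{main}, taking advantage of the extra room available once the additive group is allowed to be merely nilpotent rather than abelian. On a nilpotent group one may always put $a+b=ab$, giving the trivial skew brace of nilpotent type, and under this choice every automorphism of the group $(N,\cdot)$ is \emph{ipso facto} an automorphism of $(N,+)$. Thus the role of Lemma~\ref{aut} -- which is what forced the hypotheses of odd order and nilpotency class at most $2$ in Theorem~\ref{main} -- disappears entirely. This is exactly what makes the two partial results recorded in the abstract work (the Sylow tower case, by an induction patterned on Theorem~\ref{tower} in which the normal Sylow factor receives the structure $a+b=ab$, and the class-at-most-$2$ case, by the decomposition of Theorem~\ref{main1}), and it is the reason to expect the full solvable statement.

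Concretely, for an arbitrary finite solvable group $G$ I would seek a factorisation $G=N_1\cdots N_k$ into nilpotent subgroups, together with a chain $G=M_0\supseteq\cdots\supseteq M_k=\{1\}$, obeying conditions (i)--(iii) of Theorem~\ref{main1} and a suitable form of~(iv). Given such data one sets $a+b=ab$ on each $N_i$ and extends by the product rule~(\ref{Nk}). Three things then have to be checked: that $+$ is well defined on products $x_1\cdots x_k$; that $(G,+)$ is nilpotent; and that the skew brace identity~(\ref{skew}) holds. The identity should follow just as in~(\ref{brace}), since $N_i$ is normal in $N_i\cdots N_k=M_{i-1}$ by~(iii), so conjugation by elements of $M_{i-1}$ induces an automorphism of $N_i$ and hence, by the observation above, an additive automorphism; moreover no commutator square roots now intervene, since $+=\cdot$ on each factor.

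The main obstacle is the very first step. The decomposition of Theorem~\ref{main1} uses the hypothesis on Sylow subgroups in an essential way: it is precisely what makes each derived subgroup $P'_{j,r}$ central in the relevant Fitting subgroup, which is what yields property~(iv) and, through it, the commuting relations~(\ref{CGNN}) on which the well-definedness of the product sum rests. For a general solvable group these derived subgroups need not be central, (iv) may fail, and the sum~(\ref{Nk}) need not be well defined. The heart of the conjecture is therefore to produce, for every finite solvable group, a nilpotent factorisation carrying enough centrality to make the patched additive operation consistent -- or, alternatively, to abandon the explicit factorisation in favour of an inductive construction along a chief series, in which each nilpotent chief factor is given the structure $a+b=ab$ and the resulting semidirect products (in the sense of Proposition~\ref{nil}) are shown to assemble into a single nilpotent additive group while the multiplicative group remains the prescribed $G$. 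Keeping the additive group nilpotent throughout such a construction is the delicate point; Theorem~\ref{Byott} supplies the converse implication, so all that is missing is the realisation of every finite solvable group as such a multiplicative group.
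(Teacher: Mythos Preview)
The statement you are addressing is a \emph{conjecture} in the paper; the paper does not prove it. It states the conjecture and then establishes two partial results in its support: Theorem~\ref{towerskew} (groups with the Sylow tower property) and Theorem~\ref{mainskew} (solvable groups whose Sylow subgroups have class at most~$2$). There is therefore no proof in the paper against which to compare your attempt.

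Your write-up is not a proof either, and you say as much. What you have produced is an accurate diagnosis: putting $a+b=ab$ on each nilpotent factor makes every multiplicative automorphism an additive one for free, so the analogue of Lemma~\ref{aut} costs nothing and the verification of the skew brace identity~(\ref{skew}) would go through exactly as in~(\ref{brace}); the real difficulty is obtaining, for an arbitrary finite solvable group, a nilpotent factorisation with the centrality property~(iv) of Theorem~\ref{main1}, and that property genuinely relies on the class-at-most-$2$ hypothesis via the argument on the derived subgroups $P'_{j,r}$. This matches precisely the scope of what the paper is able to prove. Your assessment of where the obstacle lies is correct, but, as you acknowledge, you have not removed it --- and neither has the paper; that is why the statement is recorded as a conjecture.
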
 

Our aim is to prove some results that support this conjecture.

\begin{theorem}\label{towerskew}
	Let $G$ be a finite group with the Sylow tower property. Then $G$ is isomorphic to the multiplicative group of a skew left brace of nilpotent type.
\end{theorem}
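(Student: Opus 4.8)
The plan is to mimic the inductive structure of the proof of Theorem~\ref{tower}, replacing the left-brace constructions with the trivial skew-brace-of-nilpotent-type structure that every nilpotent group carries (as exhibited just above the statement). Concretely, I would argue by induction on the number $n$ of distinct prime divisors of $|G|$. The Sylow tower property gives a normal series $\{1\}=G_0\subseteq G_1\subseteq\dots\subseteq G_n=G$ with each quotient $G_{i+1}/G_i$ isomorphic to a Sylow subgroup of $G$. For $n=1$ the group $G=G_1$ is a $p$-group, hence nilpotent, and the construction $a+b=ab$ makes it a skew left brace of nilpotent type directly.

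For the inductive step I would extract, exactly as in Theorem~\ref{tower}, a normal Sylow subgroup $G_1$ (nilpotent, being a $p$-group) together with a Schur--Zassenhaus complement $H$, so that $G=G_1H$ is an inner semidirect product with $G_1$ normal, $G_1\cap H=\{1\}$, and $H\cong G/G_1$ again has the Sylow tower property with $n-1$ prime divisors. By induction $H$ is the multiplicative group of a skew left brace $(H,+,\circ)$ of nilpotent type. Equip the normal nilpotent subgroup $G_1$ with its trivial skew-brace-of-nilpotent-type structure $a+b=ab$. The key point is then to build the semidirect product of skew left braces: the multiplicative structure is the given group multiplication on $G$, and the additive structure is the semidirect product of $(G_1,+)$ and $(H,+)$ using a suitable action. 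The resulting additive group is an extension of the nilpotent group $(H,+)$ by the nilpotent $p$-group $(G_1,+)$ with a complemented kernel, hence nilpotent, so the new skew brace is again of nilpotent type, and its multiplicative group is $G$ by construction.

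The main obstacle, and the step that needs genuine care rather than routine verification, is producing the semidirect-product construction of skew left braces and checking that it applies here. Unlike the brace case, where one needs an automorphism of the additive group that also respects the brace structure (which is why Lemma~\ref{aut} and its parity hypotheses were essential), for skew braces of nilpotent type the relevant compatibility is milder: the conjugation action of $H$ on the normal subgroup $G_1$, which for the trivial additive structure $a+b=ab$ coincides with the group conjugation, is automatically an additive automorphism of $(G_1,+)=G_1$ because it is a group automorphism. Thus the action that defines the multiplicative semidirect product is simultaneously an action by additive automorphisms, which is precisely what is required to glue $(G_1,+,\circ)$ and $(H,+,\circ)$ into a skew left brace on $G$ whose $\circ$ operation is the ambient multiplication. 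I would verify the skew brace axiom~(\ref{skew}) on elements $g=a h$ with $a\in G_1$, $h\in H$ by expanding both sides using $\lambda$-type relations, and confirm that the additive group, being a complemented extension of two nilpotent groups by an action through automorphisms, lies in the class of nilpotent groups, completing the induction.
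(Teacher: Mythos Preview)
Your inductive skeleton and use of Schur--Zassenhaus match the paper exactly, but there is a genuine gap in the step where you assemble the additive group. You write that the additive structure on $G$ is ``the semidirect product of $(G_1,+)$ and $(H,+)$ using a suitable action'' (namely the conjugation action), and then conclude that this additive group is nilpotent because it is ``a complemented extension of two nilpotent groups by an action through automorphisms.'' That implication is false: $S_3\cong \Z/(3)\rtimes\Z/(2)$ is a complemented extension of one nilpotent group by another and is not nilpotent. If the additive group really were a nontrivial semidirect product, you would have no reason to expect it to be nilpotent, and the result would not follow.

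What actually makes the construction work---and what the paper does---is that in the semidirect product of skew left braces the additive group is the \emph{direct} product $(G_1,+)\times (H,+)$, not a semidirect product. Concretely, one sets $ac+bd=(a+b)(c+d)=ab\,(c+d)$ for $a,b\in G_1$ and $c,d\in H$, which is well defined because $G=G_1H$ with $G_1\cap H=\{1\}$. Nilpotency of $(G,+)$ is then immediate, since a direct product of nilpotent groups is nilpotent. The conjugation action of $H$ on $G_1$ enters only in checking the skew brace identity for the ambient multiplication, not in forming the additive group. Once you correct the description of $(G,+)$ to a direct product, your plan coincides with the paper's proof.
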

\begin{proof}
Let $n$ be the number of prime divisors of $|G|$. We shall prove the result by induction on $n$. For $n=1$, $G$ is nilpotent and the result holds by the above argument in this case.

Suppose that $n>1$ and the result holds for every finite group $H$ with the Sylow tower property such that the number of prime divisors of $|H|$ is less than $n$. Since $G$ has the Sylow tower property, $G$ has a normal Sylow subgroup $P$. By the Schur--Zassenhaus theorem, $P$ has a complement $H$ in $G$, that is, $H$ is a subgroup of $G$ such that $G=PH$ and $P\cap H=\{ 1\}$. Note that $H$ also has the Sylow tower property and the number of prime divisors of $H$ is $n-1$. Hence, by the induction hypothesis, we can define a sum $+$ on $H$ such that $(H,+,\cdot)$ is a skew left brace of nilpotent type. Since $P$ is nilpotent, we define a sum $+$ on $P$ by the rule
$$a+b=ab,$$
for all $a,b\in P$. Thus $(P,+,\cdot)$ is a skew left brace of nilpotent type. Now we define a sum $+$ on $G$ by the rule
$$ac+bd=(a+b)(c+d),$$
for all $a,b\in P$ and all $c,d\in H$. Then it is clear that $(G,+)$ is the inner direct product of $(P,+)$ and $(H,+)$. Hence $(G,+)$ is a nilpotent group. Furthermore, for every $a,b,x\in P$ and $c,d,y\in H$ we have that
\begin{eqnarray*}
ac(bd+xy)
    &=& ac(b+x)(d+y)=acbx(d+y)\\
	&=& acbxc^{-1}c(d+y)=acbxc^{-1}(cd-c+cy)\\
	&=& acbc^{-1}a^{-1}acxc^{-1}(cd-c+cy)\\
	&=& (acbc^{-1}-a+acxc^{-1})(cd-c+cy)\\
	&=& acbc^{-1}cd-ac+acxc^{-1}cy\\
	&=& acbd-ac+acxy.
	\end{eqnarray*} 
Hence, $(G,+,\cdot)$ is a skew left brace of nilpotent type. In fact $(G,+,\cdot)$ is the inner semidirect product of the trivial skew left brace $(P,+,\cdot)$ by the skew left brace $(H,+,\cdot)$. Therefore, the result follows by induction.    	
\end{proof}	

\begin{corollary}
	 Every finite supersolvable group is isomorphic to the multiplicative group of a skew left brace of nilpotent type. 
\end{corollary}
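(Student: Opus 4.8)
The plan is to deduce this corollary directly from Theorem~\ref{towerskew}: it suffices to show that every finite supersolvable group $G$ has the Sylow tower property in the sense of the definition preceding Theorem~\ref{tower}, and then $G$ is isomorphic to the multiplicative group of a skew left brace of nilpotent type by that theorem. Thus the entire brace-theoretic content is already contained in Theorem~\ref{towerskew}, and what remains is the purely group-theoretic fact that supersolvability implies the existence of such a Sylow tower.

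This implication is classical, and I would establish it by induction on $|G|$ (equivalently, on the number of prime divisors of $|G|$). Recall that every quotient of a supersolvable group is supersolvable and that a finite supersolvable group has nilpotent derived subgroup. Letting $p$ be the largest prime dividing $|G|$, the first and only substantial step is to prove that the Sylow $p$-subgroup $P$ is normal in $G$. One standard way is to note that $G'$ is nilpotent, so its Sylow $p$-subgroup $P\cap G'$ is characteristic in $G'$ and hence normal in $G$; passing to $\bar G=G/(P\cap G')$, whose derived subgroup now has order prime to $p$, one finds that $\bar P\,\bar G'$ is a normal supersolvable subgroup of $\bar G$ with top prime $p$, to which the inductive normality claim applies, yielding $\bar P\trianglelefteq\bar G$ and therefore $P\trianglelefteq G$. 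Once $P\trianglelefteq G$ is established, $G/P$ is supersolvable with one fewer prime divisor, so by induction $G/P$ has a Sylow tower; lifting this series through the quotient map and prepending $\{1\}\subseteq P$ produces a normal series of $G$ whose successive factors are isomorphic to the Sylow subgroups of $G$ (with the primes appearing in decreasing order). This is exactly the Sylow tower property required by Theorem~\ref{towerskew}.

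Alternatively, the existence of an ordered Sylow tower for finite supersolvable groups is a well-documented result in their structure theory (see, e.g., \cite{Robinson}), so the induction above may simply be replaced by a citation. The only point deserving care is to match the paper's definition of the Sylow tower property---where each factor must be isomorphic to a Sylow subgroup of the whole group $G$---with the tower furnished by supersolvability; since in the series above $G_{i}/G_{i-1}$ is a genuine Sylow $p_i$-subgroup of $G$, the two notions coincide and no further argument is needed. I do not anticipate any real obstacle: the corollary is essentially a reduction to Theorem~\ref{towerskew}, and the main (classical) effort is confined to verifying the normality of the Sylow subgroup for the largest prime, after which the induction closes immediately.
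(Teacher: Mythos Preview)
Your proposal is correct and follows essentially the same approach as the paper: the paper's entire proof is the sentence ``It is well known that every finite supersolvable group has the Sylow tower property. Hence, the result follows by Theorem~\ref{towerskew}.'' Your additional sketch of why supersolvability yields a Sylow tower is more detail than the paper supplies (and your alternative of simply citing the result matches the paper exactly), so nothing further is needed.
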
 
\begin{proof}
	It is well known that every finite supersolvable group has the Sylow tower property. Hence, the result follows by Theorem \ref{towerskew}.
\end{proof}	

\begin{theorem}\label{mainskew}
	Let $G$ be a finite solvable group such that all Sylow subgroups have nilpotency class at most $2$. Then $G$ is isomorphic to the multiplicative group of  a skew left brace of nilpotent type. 
\end{theorem}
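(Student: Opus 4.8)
The plan is to mimic the proof of Theorem \ref{main}, but to replace the symmetrized sum $x+y = xy[y,x]^{1/2}$ — which required odd order to extract square roots — by the naive sum $x+y = xy$ that works on any nilpotent group, at the cost of producing only a skew left brace of nilpotent type rather than a (left) brace of abelian type. First I would invoke Theorem \ref{main1} to obtain the nilpotent subgroups $N_1,\dots,N_k$ and the descending chain $G=M_0\supseteq M_1\supseteq\cdots\supseteq M_k=\{1\}$ satisfying properties (i)--(iv). As recorded after Theorem \ref{main}, this gives $G=N_1\cdots N_k$ with $M_i=N_{i+1}\cdots N_k$, and each $N_i$ is nilpotent. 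On each $N_i$ I would put the sum $a+b=ab$, making $(N_i,+,\cdot)$ a skew left brace of nilpotent type (the additive group is literally $N_i$). As in Theorem \ref{main} I would then define the global sum on $G$ by
$$(x_1\cdots x_k)+(y_1\cdots y_k)=(x_1+y_1)\cdots(x_k+y_k)=(x_1y_1)\cdots(x_ky_k),$$
using the unique factorization $G=N_1\cdots N_k$.

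The first real task is \emph{well-definedness} of this sum, exactly the point occupying the bulk of the proof of Theorem \ref{main}. I would reuse that argument verbatim: writing $x_1\cdots x_k=z_1\cdots z_k$ and $y_1\cdots y_k=t_1\cdots t_k$, the centrality relations (iv) yield, by induction on $i$, that
$$z_i^{-1}\cdots z_1^{-1}x_1\cdots x_i\in\Ze(F(M_{i-1})F(M_i))\subseteq C_G(N_iN_{i+1}),$$
and similarly for the $t,y$ chain. These are precisely the relations (\ref{CGNN}) and (\ref{CGNN2}), whose derivation used only property (iv) and never the square-root operation. The only place in the original argument where the symmetrization entered was through the factors $[y_i,x_i]^{1/2}$; with the naive sum these commutators disappear, so the rearrangement collapses to show directly that $(x_1y_1)\cdots(x_ky_k)=(z_1t_1)\cdots(z_kt_k)$, and the step deducing $z_i^{-1}x_i\in\Ze(N_i)$ (hence agreement factor by factor) goes through unchanged. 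This confirms $(G,+)$ is a well-defined abelian\,/\,nilpotent-type additive structure; in fact, since the additive group is the direct product of the nilpotent groups $N_i$ it is nilpotent, so the brace will be of nilpotent type.

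Next I would verify the skew left brace identity (\ref{skew}), $a\circ(b+c)=a\circ b-a+a\circ c$, again paralleling (\ref{brace}). Here the key structural fact is that conjugation by an element of $M_i=N_{i+1}\cdots N_k$ normalizes $N_i$ (property (iii)), and therefore induces an automorphism of the skew brace $(N_i,+,\cdot)$ — but now I cannot cite Lemma \ref{aut}, which was special to the symmetrized brace. Instead I would observe that for the \emph{trivial-sum} skew brace $(N_i,+,\cdot)$ with $a+b=ab$, \emph{every} group automorphism of $N_i$ is automatically an automorphism of $+$, since $+$ is just the group multiplication; conjugation is a group automorphism, so this compatibility is free. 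With this in hand the chain of equalities in (\ref{brace}) is replaced by the analogous skew computation — exactly the one already carried out in the inductive step of Theorem \ref{towerskew}, namely
$$ac(bd+xy)=acbc^{-1}a^{-1}\cdot acxc^{-1}\cdots = acbd-ac+acxy,$$
performed coordinate-by-coordinate across the factors $N_1,\dots,N_k$ — yielding
$$a_1\cdots a_k\circ(b_1\cdots b_k+c_1\cdots c_k)=a_1\cdots a_k\circ(b_1\cdots b_k)-a_1\cdots a_k+a_1\cdots a_k\circ(c_1\cdots c_k).$$

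I expect the main obstacle to be purely bookkeeping rather than conceptual: checking that in the skew identity (\ref{skew}), where $-a$ sits \emph{between} $a\circ b$ and $a\circ c$ rather than being subtracted at the end as in the abelian brace law (\ref{braceeq}), the interleaved conjugations by the tails $a_{j+1}\cdots a_k$ still telescope correctly across the $k$ factors. In the symmetric-brace proof the abelianity of $(G,+)$ let one freely commute summands; here one must be careful that the noncommutative additive structure does not spoil the coordinatewise split. However, because $(G,+)$ is the \emph{internal direct} product of the $N_i$ (the sum is defined to respect the factorization), additive operations in distinct coordinates commute, and within each coordinate the computation is literally the one in Theorem \ref{towerskew}. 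Thus the argument reduces to the single-factor skew-brace identity applied $k$ times, and no genuinely new difficulty arises. Therefore $(G,+,\cdot)$ is a skew left brace of nilpotent type with multiplicative group $G$, as required. $\qed$
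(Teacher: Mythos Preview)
Your proposal is correct and follows exactly the paper's approach: invoke Theorem~\ref{main1}, equip each $N_i$ with the trivial sum $x+y=xy$, define the global sum coordinatewise, and verify well-definedness and the skew identity (\ref{skew}) by the same centrality manipulations as in Theorem~\ref{main}, noting (as you do) that Lemma~\ref{aut} is replaced by the triviality that every group automorphism of $N_i$ preserves the sum $x+y=xy$. One small correction: since the factorization $G=N_1\cdots N_k$ is \emph{not} unique, $(G,+)$ is only a \emph{quotient} of $N_1\times\cdots\times N_k$ rather than the internal direct product (the paper records this via the epimorphism $f\colon N_1\times\cdots\times N_k\to (G,+)$), but this still gives nilpotency of $(G,+)$ and the additive commutation of distinct coordinates that you use.
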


\begin{proof}
	By Theorem \ref{main1}, there exist nilpotent subgroups $N_1,\dots , N_k$ and subgroups $G=M_0\supseteq M_1\supseteq\dots\supseteq M_{k-1}\supseteq M_k=\{ 1\}$ such that
	\begin{itemize}
		\item[(i)] $G=N_1\cdots N_iM_i$, for all $1\leq i\leq k$,
		\item[(ii)] $M_{i-1}=N_iM_i$, for all $1\leq i\leq k$,
		\item[(iii)] $N_i$ is normal in $M_{i-1}$, for all $1\leq i\leq k$,
		\item[(iv)] $((\dots ((N_1\cap M_1)N_2)\cap \dots\cap M_{i-1})N_i)\cap M_i$ is normal in $M_i$ and a central subgroup of $F(M_{i-1})F(M_i)$, for all $1\leq i\leq k$.	
	\end{itemize} 
	In particular $G=N_1\cdots N_k$. We define a sum on each $N_i$ by the rule
	\begin{equation} \label{N1skew} x+y=xy,
	\end{equation}
	for all $x,y\in N_i$. Note that $(N_i,+,\cdot)$ is a skew left brace of nilpotent type.  Thus, we may assume that $k>1$. We define a sum on $G$ by the rule
	\begin{equation} \label{Nkskew} (x_1\cdots x_k)+(y_1\cdots y_k)=(x_1+y_1)\cdots (x_k+y_k),
	\end{equation} 
	for all $x_1,y_1\in N_1,\,\dots ,\, x_k,y_k\in N_k$.
	Similarly as in the proof of Theorem \ref{main}, we prove that this sum on $G$ is well-defined and that $(G,+,\cdot)$ is a skew left brace.
	Note that $f\colon N_1\times \dots\times N_k\longrightarrow (G,+)$ defined by $f(x_1,\dots ,x_k)=x_1\cdots x_k$, for all $x_1\in N_1,\,\dots ,\, x_k\in N_k$, is an epimorphism of groups. Hence $(G,+)$ is a nilpotent group and the result follows.
\end{proof}

{\bf Acknowledgments.} The first author was partially supported by
	the grant MINECO PID2023-147110NB-I00 (Spain).


\end{document}